\newtheorem{theorem}{Theorem}[section]
\newtheorem{lemma}[theorem]{Lemma}
\newtheorem{proposition}[theorem]{Proposition}
\newtheorem{corollary}[theorem]{Corollary}
\newtheorem{definition}[theorem]{Definition}
\def\a{\bar {a}}\def\b{\bar {b}}
\def\x{\bar {x}}\def\y{\bar {y}}
\def\Nn{\mathds N}
\def\Rn{\mathds R}
\def\0{\sf 0}
\def\dotminussym#1#2{%
  \setbox0=\hbox{$\m@th#1-$}%
  \kern.5\wd0%
  \hbox to 0pt{\hss\hbox{$\m@th#1-$}\hss}%
  \raise.8\ht0\hbox to 0pt{\hss$\m@th#1.$\hss}%
  \kern.5\wd0}
\newcommand{\dotminus}{\mathbin{\mathpalette\dotminussym{}}}
\begin{document}

\begin{center}
{\Large\sc The isomorphism theorem for\\ linear fragments of continuous logic}
\bigskip

{\bf Seyed-Mohammad Bagheri}

\vspace{3mm}

{\footnotesize {\footnotesize Department of Pure Mathematics, Faculty of Mathematical Sciences,\\
Tarbiat Modares University, Tehran, Iran, P.O. Box 14115-134}\\
e-mail:} bagheri@modares.ac.ir
\end{center}

\begin{abstract}
The ultraproduct construction is generalized to $p$-ultramean constructions
($1\leqslant p<\infty$) by replacing ultrafilters with finitely additive measures.
These constructions correspond to the linear fragments $\mathscr L^p$ of continuous logic.
A powermean variant of Keisler-Shelah isomorphism theorem is proved for $\mathscr L^p$.
It is then proved that $\mathscr L^p$-sentences (and their approximations) are
exactly those sentences of continuous logic which are preserved by such constructions.
Some other applications are also given.
\end{abstract}

{\sc Keywords}: {\small Continuous logic, linear formula, ultramean, isomorphism theorem}

{\small {\sc AMS subject classification:} 03C40} 
\bigskip

A substantial part of the expressive power of every logic comes from its
connectives. Adding new connectives (e.g. infinitary conjunctions)
in order to strengthen the expressive power is a usual way to make a new logic.
Reducing the expressive power by eliminating some connectives is an other way
which has been payed attention by some authors. For example, in \cite{Poizat}
(roughly) the connective $\neg$ is abandoned in order to recover
Robinsonian model theory where non-injective homomorphisms are taken into account.
It is natural to expect that the resulting logic satisfy an appropriate form of compactness
theorem necessary for developing model theory of the intended structures.

In recent years, first order logic has been successfully generalized to
continuous logic by using real numbers (in some bounded interval) as truth values
and continuous real functions as connectives (see \cite{BBHU, CK}).
In fact, with some precautions, one may shift to the whole real line
and use $\{+,r\cdot,\wedge, \vee\}$ as a system of connectives.
Other connectives are then approximated by combinations of these
connectives in the light of the Stone-Weierstrass theorem.
This logic is invented to handle metric spaces equipped with
a continuous structure. Notably, in this framework, compact structures
behave like finite models in first order logic, so that they
can be described by continuous expressions up to isomorphism.

Linear continuous logic is the sublogic of continuous logic
obtained by restricting the connectives to addition $+$ and scalar
multiplication $r\cdot$, hence reducing the expressive power considerably.
This reduction leads to the linearization of most basic tools
and technics of continuous logic such as the ultraproduct construction,
compactness theorem, saturation etc (see \cite{Bagheri-Lip}).
The linear variant of the ultraproduct construction is the ultramean
construction obtained by replacing ultrafilters (as two valued measures)
with arbitrary maximal finitely additive measures.
A consequence of this relaxation is that compact structures with at least
two elements have proper elementary extensions in this sublogic.
In particular, they have now non-categorical theories.
Thus, a model theoretic framework for study of such structures is provided.
A more remarkable aspect of this logic is that the type spaces form a compact convex set.
The extreme types then play a crucial role in the study of linear theories.

The goal of the present paper is to prove the linear variant of Keisler-Shelah
isomorphism theorem for linear continuous logic.
For this purpose, we first show that every continuous structure
has a powermean which is $\aleph_1$-saturated in the linear sense,
i.e. it realizes `linear types' over countable sets of parameters.
It turns out that linearly equivalent saturated structures are partially isomorphic,
hence equivalent in the full continuous logic sense.
In this way, the problem is reduced to the isomorphism theorem for continuous logic.
A consequence of the linear isomorphism theorem is that
linear formulas (and their approximations) are exactly those continuous
formulas which are preserved by the ultramean and powermean constructions.

Some important points should be noted here.
First, a more general powermean construction will be defined and
what we show in this paper is that linearly equivalent
models have isomorphic powermeans in this sense.
The main obstacle in proving the stronger variant is to find `maximal'
ultracharges such that corresponding powermean be $\aleph_1$-saturated.
Second, in \cite{Bagheri-Lip} all functions and relations were considered to be Lipschitz.
We relax this condition here and shift to the more general case where functions
are uniformly continuous in the sense explained below.
Finally, this sublogic has a more general form $\mathscr L^p$ where $p\geqslant1$.
All results proved in the paper hold for every $\mathscr L^p$.
However, to simplify the presentation of the paper, we focus on
the case $p=1$ and discuss the general case at the end of the paper.

\section{Full and linear continuous logics}
All metric spaces will be assumed to have bounded diameters.
By a modulus of uniform continuity (modulus for short) is meant an
increasing continuous concave function $\lambda:\Rn^+\rightarrow\Rn^+$ such that $\lambda(0)=0$.
Such $\lambda$ is then subadditive, i.e. 
$\lambda(s+t)\leqslant\lambda(s)+\lambda(t)$. Let $X,Y$ be metric spaces.
If $\lambda$ is a modulus, a function $f:X\rightarrow Y$
is called \emph{$\lambda$-continuous} if
$$d(f(x),f(y))\leqslant\lambda(d(x,y)) \hspace{14mm} \forall x,y\in X.$$
$f$ is said to be uniformly continuous if for every $\epsilon>0$
there exists $\delta>0$ such that $d(x,y)<\delta$ implies $d(f(x),f(y))<\epsilon$.
It is not hard to see that every uniformly continuous function in this sense
is $\lambda$-continuous for some suitable modulus $\lambda$ (see \cite{Axler}).

A continuous signature is a set $\tau$ consisting of function,
relation and constant symbols such that to each function symbol $F$
is assigned a modulus $\lambda_F$ and to each relation symbol $R$ is assigned
a modulus $\lambda_R$ as well as a bound ${\bf b}_R\geqslant0$.
It is always assumed that $\tau$ contains a distinguished binary relation symbol $d$
(with $\mathbf{b}_d=1$) which corresponds to $=$ in first order logic.
A $\tau$-structure is a metric space $(M,d)$ on which the symbols of $\tau$ are
appropriately interpreted. That is, for $n$-ary function symbol
$F\in \tau$, the function $F^M:M^n\rightarrow M$
is $\lambda_F$-continuous and similarly for $R\in \tau$, the relation $R^M:M^n\rightarrow\Rn$
is $\lambda_R$-continuous with $\|R^M\|_{\infty}\leqslant{\bf b}_R$.
Here, we put the metric $\sum_{i=1}^nd(x_i,y_i)$ on $M^n$.
In particular, we must have that $diam(M)\leqslant1$.
Let $\tau$ be a signature.
The set of $\tau$-formulas is inductively defined as follows:
$$r,\ \ d(t_1,t_2),\ \ R(t_1,...,t_n),\ \ r\phi,\ \ \phi+\psi,
\ \ ,\phi\wedge\psi,\ \ \phi\vee\psi,\ \ \sup_x\phi,\ \ \inf_x\phi$$
where $R\in \tau$ is $n$-ary, $t_1,...t_n$ are $\tau$-terms and $r\in\Rn$.
In the light of Stone-Weierstrass theorem, other kinds of formulas
such as $\phi\times\psi$ are approximated by the above ones,
hence unnecessary in the formalism.
A formula without free variable is called a \emph{sentence}.
Expressions of the form $\phi\leqslant\psi$ are called \emph{conditions}
(resp. closed conditions if $\phi,\psi$ are sentences).
A \emph{theory} is a set of closed conditions.
If $\phi(\x)$ is a formula, $M$ is a structure and $\a\in M$,
the real value $\phi^M(\a)$ is defined by induction on the complexity of $\phi$.
Every map $\phi^M:M^n\rightarrow\Rn$ is then bounded by some $\mathbf b_\phi$
and continuous with respect to some modulus $\lambda_\phi$ depending only to $\phi$.

The logic based on the set of all formulas (stated above)
is called \emph{continuous logic} and is denoted by CL.
This logic is an extension of first order logic and
it is usually presented in an equivalent way by choosing
the interval $[0,1]$ as value space and $\wedge, \dotminus, \frac{x}{2}$
etc as connectives (see \cite{BBHU}).
The passage to $\Rn$ and using the above connectives has the advantage
of leading us to interesting sublogics of CL such as $\mathscr L^1$ defined below.

The set of formulas of the logic $\mathscr L^1$ (also called linear formulas)
is inductively defined as follows:
$$r,\ \ d(t_1,t_2),\ \ R(t_1,...,t_n),\ \ r\phi,\ \ \phi+\psi,\ \ \sup_x\phi,\ \ \inf_x\phi.$$
So, nonlinear connectives $\wedge,\vee$ are not used.
The logic $\mathscr L^1$ has clearly a weaker expressive power than CL.
In particular, the notions of elementary equivalence and elementary embedding are weaker.
Let $M,N$ be $\tau$-structures. We write $M\equiv_{CL} N$ if $\sigma^M=\sigma^N$
for every CL sentence $\sigma$ in $\tau$ and write $M\equiv N$ if
$\sigma^M=\sigma^N$ for every linear sentence $\sigma$ in $\tau$.
So, $\equiv_{CL}$ is stronger than $\equiv$.
Similarly, we write $M\preccurlyeq N$ if $\phi^M(\a)=\phi^N(\a)$ for
every $\a\in M$ and linear formula $\phi$ in $\tau$.

A \emph{linear condition} is an expression of the form $\phi\leqslant\psi$ where
$\phi$ and $\psi$ are linear formulas.
The expression $\phi=\psi$ is an abbreviation for $\{\phi\leqslant\psi, \psi\leqslant\phi\}$.
$M$ is model of a closed condition $\phi\leqslant\psi$ if $\phi^M\leqslant\psi^M$.
A set of closed linear conditions is called a \emph{linear theory}.
A linear theory $T$ is \emph{linearly satisfiable} if for every
conditions $\phi_1\leqslant\psi_1,\ ...,\ \phi_n\leqslant\psi_n$ in $T$
and $0\leqslant r_1,...,r_n$, the condition
$\sum_ir_i\phi_i\leqslant\sum_ir_i\psi_i$ is satisfiable.
The linear compactness theorem can be proved by a linear variant of Henkin's method
or by using ultramean (see \cite{Bagheri-Lip, Bagheri-Safari}).
We will give a shorter proof of this theorem by the ultramean method in section \ref{section4}.

\begin{theorem} \label{compactness}
(Linear compactness) Every linearly satisfiable linear theory is satisfiable.
\end{theorem}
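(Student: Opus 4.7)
My plan is to use the ultramean construction twice: first to convert the hypothesis, which controls only nonnegative combinations of single conditions, into a genuine model of each finite sub-theory of $T$, and then, via a standard ultramean over the finite subsets of $T$, to assemble these finite models into a single model of $T$.

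\emph{Finite case via Hahn-Banach and an ultramean.} Fix a finite $J=\{\phi_i\leqslant\psi_i : 1\leqslant i\leqslant n\}\subseteq T$, and for each $\bar r\in\Rn^n_{\geqslant 0}$ choose, by linear satisfiability, a structure $M_{\bar r}$ with $\sum_i r_i(\phi_i^{M_{\bar r}}-\psi_i^{M_{\bar r}})\leqslant 0$. Let $v_{\bar r}=(\phi_i^{M_{\bar r}}-\psi_i^{M_{\bar r}})_{i=1}^n\in\Rn^n$; the set $V=\{v_{\bar r}:\bar r\in\Rn^n_{\geqslant 0}\}$ is bounded by the intrinsic bounds on linear formulas. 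A separating-hyperplane argument in $\Rn^n$ forces $\overline{\mathrm{conv}}(V)$ to meet the non-positive orthant: otherwise Hahn-Banach supplies $\bar r^*\in\Rn^n_{\geqslant 0}$ and $c>0$ with $\langle\bar r^*,v\rangle\geqslant c$ for every $v\in V$, contradicting the choice of $M_{\bar r^*}$. Hence there is a finitely additive probability measure $\mu_J$ on the witnesses with $\int(\phi_i^{M_{\bar r}}-\psi_i^{M_{\bar r}})\,d\mu_J\leqslant 0$ for every $i$, and the ultramean $N_J=\prod M_{\bar r}/\mu_J$, together with the defining linear integration identity $\sigma^{N_J}=\int\sigma^{M_{\bar r}}\,d\mu_J$ for every linear sentence $\sigma$, yields $\phi_i^{N_J}\leqslant\psi_i^{N_J}$ for all $i$, so $N_J\models J$.

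\emph{From finite models to a model of $T$.} Let $I$ be the set of finite subsets of $T$, and for each $J\in I$ pick $N_J\models J$ as above. The up-sets $U_{J_0}=\{J\in I : J\supseteq J_0\}$ have the finite intersection property, since $U_{J_0}\cap U_{J_1}=U_{J_0\cup J_1}$, so Zorn's lemma yields a maximal finitely additive probability measure $\mu$ on $I$ with $\mu(U_{J_0})=1$ for every $J_0\in I$. For every $\phi\leqslant\psi\in T$, the set $U_{\{\phi\leqslant\psi\}}$ has full $\mu$-measure and on it $\phi^{N_J}\leqslant\psi^{N_J}$, so the ultramean $N=\prod_{J\in I}N_J/\mu$ satisfies
\[\phi^N-\psi^N=\int(\phi^{N_J}-\psi^{N_J})\,d\mu\leqslant 0,\]
whence $N\models T$.

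\emph{Main obstacle.} The conceptual content is in the first step: going from a formal convex combination of real values to an actual structure rests entirely on the ultramean construction and its linear integration (\L o\'s-type) theorem. For quantifier-free linear sentences this is essentially the definition, but propagating it to sentences containing $\sup_x$ and $\inf_x$ is what forces the measures used in the construction to be maximal (ultracharges) rather than arbitrary finitely additive probability measures; the careful setup of that machinery earlier in the section is precisely what makes this proof shorter than a Henkin-style construction.
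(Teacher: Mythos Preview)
Your argument is correct, but the route differs from the paper's. You bootstrap in two stages: a finite-dimensional separating-hyperplane argument in $\Rn^n$ produces, for each finite $J\subseteq T$, an ultracharge whose ultramean models $J$; then a standard ultrafilter over the finite subsets of $T$ assembles these into a model of $T$. The paper instead works globally in one pass: it extends $T$ to a maximal linearly satisfiable theory, regards it as a positive normalized functional on the ordered vector space $\mathbb D$ of sentences (identified with functions on a set $I$ of $\equiv$-representatives), applies the Kantorovich extension theorem to push this functional to all of $\mathbf{C}_b(I)$, and then invokes the Riesz representation theorem for charges to obtain a single ultracharge $\bar\mu$ with $N=\prod_{\bar\mu}M_i\vDash T$.

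Your approach is more elementary---only finite-dimensional Hahn--Banach plus the classical ultrafilter-compactness trick---and makes transparent that the second stage needs nothing beyond an ordinary ultrafilter. The paper's approach, while invoking heavier infinite-dimensional machinery, is a single ultramean construction and immediately yields the sharper conclusion (stated just after the proof) that any theory linearly satisfiable in a class $\mathcal K$ has a model that is an ultramean of members of $\mathcal K$. Two small points worth tightening in your write-up: the passage from a point of $\overline{\mathrm{conv}}(V)$ to an \emph{ultracharge} $\mu_J$ on the witness set deserves a line of justification (e.g.\ compactness of the ultracharge space in the weak$^*$ topology and continuity of $\mu\mapsto\int v\,d\mu$ for bounded $v$); and in Stage~2 your ``maximal finitely additive probability measure'' is simply an ultracharge, and in fact an ultrafilter containing the up-sets $U_{J_0}$ already suffices.
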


In particular, if $\sigma\leqslant0$ and $0\leqslant\sigma$ are
satisfiable, then $\sigma=0$ is satisfiable.
A satisfiable linear theory $T$ is \emph{linearly complete}
if for each sentence $\sigma$, there is a unique $r$ such that
$\phi=r\in T$. Let $T$ be a linearly complete theory.
A linear \emph{$n$-type} for $T$ is a maximal set $p(\x)$
of linear conditions $\phi(\x)\leqslant\psi(\x)$ such that $T\cup p(\x)$ is satisfiable.
If $p$ is a linear $n$-type, then for each $\phi(\x)$ there is a unique real number $\phi^p$
such that $\phi=\phi^p$ belongs to $p$ and the map $\phi\mapsto\phi^p$ is positive linear.
So, thanks to linear compactness, an $n$-type can be redefined as a linear
functional $p:\mathbb D_n(T)\rightarrow\Rn$ such that for each formula
$\phi(\x)$, the condition $\phi=p(\phi)$ is satisfiable in some model of $T$.
Here, $\mathbb D_n(T)\subseteq \mathbf{C}_b(M^n)$ is the subspace of functions
$\phi^M(\x)$ where $\phi$ is a $\tau$-formula, $|\x|=n$ and $M$ is any model of $T$.
Any function in the completion of $\mathbb D_n(T)$ is called a \emph{definable relation}.

The set of $n$-types of $T$ is denoted by $S_n(T)$.
This is exactly the state space of $\mathbb D_n$, i.e. the set of
functionals $f:\mathbb D_n\rightarrow\Rn$ such that $\|f\|=1=f(1)$.
Types over parameters from $A\subseteq M\vDash T$ are defined similarly.
$S_n(A)$ denotes the set of $n$-types over $A$.
By the Banach-Alaoglu theorem, $S_n(T)$ is a compact
convex subset of the unit ball of $(\mathbb D_n(T))^*$ (similarly for $S_n(A)$).
Some interesting properties of $T$ are related to its extreme types,
i.e. the extreme points of $S_n(T)$.
We recall that $p$ is \emph{extreme} if for every $\lambda\in(0,1)$,\
$p=\lambda p_1+(1-\lambda)p_2$ implies that $p_1=p_2=p$.
A type $p(\x)$ is realized by $\a\in M$ if $p(\phi)=\phi^M(\a)$ for every $\phi$.

\begin{definition}
\em{ A model $M\vDash T$ is called (linearly) \emph{$\kappa$-saturated}
if every $p\in S_1(A)$ is realized in $M$ whenever $A\subseteq M$ and $|A|<\kappa$.}
\end{definition}

\begin{lemma}
Let $A\subseteq M$ and $p(\x)$ be a type over $A\subseteq M$.
Then $p$ is realized in some $M\preccurlyeq N$.
\end{lemma}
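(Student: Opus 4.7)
The plan is to apply the linear compactness theorem (Theorem~\ref{compactness}) to the linear theory consisting of the linear elementary diagram of $M$ together with conditions forcing a fresh tuple $\c$ of new constants to realize $p$. A model of that theory will provide an elementary extension $M \preccurlyeq N$ in which $\c^N$ realizes $p$.

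A preparatory step is to extend $p$ to a complete type $q \in S_n(M)$. Since types have been identified with states (unital norm-one functionals) on $\mathbb D_n$, and $p$ is such a state on the subspace $\mathbb D_n(A) \subseteq \mathbb D_n(M)$, the Hahn--Banach theorem yields a norm-preserving linear extension $q : \mathbb D_n(M) \to \Rn$ with $q(1) = 1$; then $\|q\| = 1 = q(1)$, so $q$ is again a state, hence a type over $M$. With $q$ in hand, form
$$\Sigma = \bigl\{\chi(\c) = q(\chi) : \chi \in \mathbb D_n(M)\bigr\},$$
working in the language $\tau$ augmented by a name for every element of $M$ and for the tuple $\c$. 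The linear elementary diagram of $M$ already sits inside $\Sigma$: when $\chi$ has no free $\x$-variable, $q(\chi) = \chi^M$, so $\chi(\c) = q(\chi)$ reduces to the diagram condition $\chi = \chi^M$.

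It remains to check that $\Sigma$ is linearly satisfiable, so that Theorem~\ref{compactness} applies. Given finitely many conditions $\chi_1(\c) = q(\chi_1), \ldots, \chi_k(\c) = q(\chi_k)$ and nonnegative weights $t_1, \ldots, t_k$, linearity of $q$ collapses their combination into a single condition $\chi(\c) = q(\chi)$ with $\chi = \sum_l t_l \chi_l$, and this is satisfiable by the very definition of $q$ as a type. Linear compactness then produces a model $N$ of $\Sigma$; its interpretations of $M$-names exhibit $M \preccurlyeq N$, and $\c^N$ satisfies $\chi(\c^N) = q(\chi)$ for every $\chi \in \mathbb D_n(M)$, so in particular realizes $p$, since $q$ restricts to $p$ on $\mathbb D_n(A)$. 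The only step above a routine use of linear compactness is the Hahn--Banach extension $p \rightsquigarrow q$, which amounts to surjectivity of the restriction map between the compact convex state spaces $S_n(M)$ and $S_n(A)$.
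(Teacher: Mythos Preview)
Your proof is correct but takes a different route from the paper's.  The paper proceeds directly: it checks that $\mathrm{ediag}(M)\cup p(\x)$ is linearly satisfiable by observing that any condition $\phi(\a,\b)\geqslant 0$ from the diagram (with $\a\in A$, $\b\in M\setminus A$) can be weakened to $\sup_{\y}\phi(\a,\y)\geqslant 0$, which is a sentence over $A$ and therefore already compatible with $p$.  No extension of $p$ is required, and the argument is two lines.  You instead first push $p$ up to a state $q$ on $\mathbb D_n(M)$ via Hahn--Banach; the elementary diagram is then absorbed into $\Sigma$ for free, and linear satisfiability collapses to a single-condition check.  Your route imports a little more machinery, but it buys the side result you mention at the end --- surjectivity of the restriction $S_n(M)\to S_n(A)$ --- which the paper's argument does not directly display.

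One small wording issue: you justify satisfiability of $\chi(\c)=q(\chi)$ by calling $q$ a \emph{type}, but at that stage what Hahn--Banach has handed you is only a \emph{state}.  The honest reason the condition is satisfiable is that positivity and $q(1)=1$ force $\inf_{\a\in M^n}\chi^M(\a)\leqslant q(\chi)\leqslant\sup_{\a\in M^n}\chi^M(\a)$, so each of the two inequalities making up $\chi(\c)=q(\chi)$ is already witnessed in $M$ itself.  With that clarification your argument goes through.
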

\begin{proof} It is sufficient to show that $ediag(M)\cup p(\x)$ is linearly satisfiable.
Indeed, for each condition $\phi(\a,\b)\geqslant0$ satisfied in $M$, where $\a\in A$ and $\b\in M-A$,
the condition $\sup_{\y}\phi(\a,\y)\geqslant0$ is satisfiable with $p$ by definition.
\end{proof}

Then, by usual chain arguments one shows that for every $M$ and $\kappa$,
there exists $M\preccurlyeq N$ which is $\kappa$-saturated.
From now on, we always work in the framework of $\mathscr L^1$
and omit the adjective `linear'.
Hence, unless otherwise stated, the linear variant of every notion is intended.
We first recall some properties of charges.

\section{Charges}
In this section we collect some facts about charges which are needed
in the proof of the isomorphism theorem.
A \emph{charge space} is a triple $(I,\mathscr A,\mu)$
where $\mathscr A$ is a Boolean algebra of subsets of $I$
and $\mu:\mathscr A\rightarrow\Rn^+$ is a finitely additive measure.
It is a probability charge if $\mu(I)=1$.
In this paper, by a charge we always mean a probability charge.
If $\mathscr A=P(I)$, $\mu$ is called an \emph{ultracharge}.
Tarski's ultrafilter theorem has an ultracharge variant.

\begin{theorem} (\cite{Rao}, Th. 3.3.)
Let $(I,\mathscr A,\mu)$ be a charge space and $X\subseteq I$.
Assume $$\sup\{\mu(A)|\ A\subseteq X, \ A\in\mathscr A\}\leqslant r\leqslant
\inf\{\mu(A)|\ X\subseteq A,\ A\in\mathscr A\}.$$
Then, there exists an extension of $\mu$ to an ultracharge $\bar\mu$
on $I$ such that $\bar\mu(X)=r$.
\end{theorem}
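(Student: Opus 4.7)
The plan is to work in two stages: first, extend $\mu$ to a charge $\bar\mu$ on the Boolean subalgebra $\mathscr{A}'\subseteq P(I)$ generated by $\mathscr{A}\cup\{X\}$ with $\bar\mu(X)=r$, and second, extend that intermediate charge to an ultracharge on all of $P(I)$ by a Zorn's lemma argument. It is enough to treat the case $X\notin\mathscr{A}$, since otherwise the hypothesis forces $r=\mu(X)$ and nothing needs to be done at the first stage.

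For the first stage I would set this up as a Hahn--Banach style extension. Let $V$ be the vector space of bounded $\mathscr{A}$-simple functions, on which $\Lambda(f):=\int f\,d\mu$ is a positive linear functional, and let $V':=V+\Rn\chi_X$. Define the putative extension by
$$\Lambda'(f+c\chi_X):=\Lambda(f)+cr,$$
which is unambiguous since $\chi_X\notin V$. The crucial verification is positivity: if $f+c\chi_X\geqslant 0$ with $f=\sum_i a_i\chi_{A_i}$ (the $A_i\in\mathscr A$ disjoint), then for $c>0$ the set $B:=\bigcup\{A_i:a_i<0\}$ is contained in $X$, so $\mu(B)\leqslant r$ by the left-hand hypothesis, and a direct bound yields $\Lambda(f)\geqslant -c\,\mu(B)\geqslant -cr$; while for $c<0$ the set $D:=\bigcup\{A_i:A_i\cap X\neq\emptyset\}$ contains $X$ and lies in $\mathscr A$, so the right-hand hypothesis gives $\mu(D)\geqslant r$, and $\Lambda(f)\geqslant -cr$ again. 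Setting $\bar\mu(E):=\Lambda'(\chi_E)$ for $E\in\mathscr A'$ defines the desired charge on $\mathscr A'$ extending $\mu$ with $\bar\mu(X)=r$; finite additivity follows from linearity of $\Lambda'$.

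For the second stage I would apply Zorn's lemma to the poset of pairs $(\mathscr B,\nu)$, where $\mathscr B$ is a Boolean subalgebra of $P(I)$ containing $\mathscr A'$ and $\nu$ is a charge on $\mathscr B$ extending $\bar\mu$, ordered by extension. Chains have obvious least upper bounds, so maximal elements $(\mathscr B_0,\nu_0)$ exist. If some $Y\in P(I)$ were not in $\mathscr B_0$, then by definition of inner and outer charge we trivially have $\sup\{\nu_0(A):A\subseteq Y,\,A\in\mathscr B_0\}\leqslant \inf\{\nu_0(A):Y\subseteq A,\,A\in\mathscr B_0\}$, so by picking any $r'$ in this interval and applying the first-stage construction to $(I,\mathscr B_0,\nu_0)$ and $Y$, we would obtain a proper extension of $(\mathscr B_0,\nu_0)$, contradicting maximality. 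Hence $\mathscr B_0=P(I)$ and $\nu_0$ is the desired ultracharge.

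The main obstacle is the positivity check in the first stage; the rest is bookkeeping. What makes that step work is precisely the symmetry between the two hypothesized inequalities: the $c>0$ case uses only $\mu_*(X)\leqslant r$, while the $c<0$ case uses only $r\leqslant\mu^*(X)$, so both halves of the hypothesis are essential and used exactly once.
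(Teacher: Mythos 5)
Your positivity computation on $V'=V+\Rn\chi_X$ is correct and is indeed the heart of the matter: for $f+c\chi_X\geqslant 0$ with $c>0$, every $A_i$ with $a_i<0$ lies inside $X$ and carries $a_i\geqslant -c$, so $\Lambda(f)\geqslant -c\mu(B)\geqslant -cr$; for $c<0$, every $A_i$ meeting $X$ carries $a_i\geqslant -c$ and their union $D\in\mathscr A$ contains $X$, so $\Lambda(f)\geqslant -c\,\mu(D)\geqslant -cr$. The genuine gap is the step ``setting $\bar\mu(E):=\Lambda'(\chi_E)$ for $E\in\mathscr A'$ defines the desired charge on $\mathscr A'$'': the functional $\Lambda'$ is only defined on $V'$, and indicators of sets in the algebra generated by $\mathscr A\cup\{X\}$ generally do \emph{not} lie in $V'$. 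Concretely, take $I=\{1,2,3,4\}$, $\mathscr A=\{\emptyset,\{1,2\},\{3,4\},I\}$, $X=\{1,3\}$; then $E=\{1\}=\{1,2\}\cap X$ belongs to $\mathscr A'$, but $\chi_E=f+c\chi_X$ is impossible for $\mathscr A$-simple $f$ (it forces $f\equiv 0$ on $\{1,2\}$, $c=1$, and then $f\equiv -1$ on $\{3,4\}$ while also $f(4)=0$). So stage one, as written, does not produce a charge on $\mathscr A'$, and stage two cannot start: your Zorn poset consists of algebras carrying genuine charges, and the successor step re-invokes exactly the unproved one-set extension.

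The repair is short and needs neither $\mathscr A'$ nor the transfinite iteration. Since $V'$ contains the constants, it majorizes $\ell^\infty(I)$, so your positive $\Lambda'$ extends to a positive linear functional $\tilde\Lambda$ on $\ell^\infty(I)$ with $\tilde\Lambda(1)=1$, either by the Kantorovich extension theorem (\cite{Aliprantis-Inf}, Th.\ 8.32, already used in the paper) or by Hahn--Banach with the sublinear majorant $q(g)=\inf\{\Lambda'(h):\ h\in V',\ g\leqslant h\}$, which automatically forces positivity. By the representation of such functionals by ultracharges (\cite{Rao}, Th.\ 4.7.4, quoted at the start of Section 4), $\tilde\Lambda(g)=\int g\,d\bar\mu$ for an ultracharge $\bar\mu$ on $I$, and then $\bar\mu(A)=\mu(A)$ for $A\in\mathscr A$ and $\bar\mu(X)=\Lambda'(\chi_X)=r$, which is the statement. (The paper itself gives no proof, citing \cite{Rao}, so there is no in-paper argument to match; if you insist on the Zorn scheme, you must first prove the one-set extension as a statement about charges on the generated algebra, e.g.\ by defining $\bar\mu$ on sets $(A\cap X)\cup(B\setminus X)$ through a finitely additive $\lambda\leqslant\mu$ with $\lambda(I)=r$ vanishing off $X$ --- which is itself another extension argument, so the functional-analytic route above is the cleaner way to finish.)
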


We recall some properties of integration with respect to charge spaces
(see \cite{Aliprantis-Inf, Rao} for more details).
Let $(I,\mathscr A,\mu)$ be a charge space and
$\mathscr A(\Rn)$ be the Boolean algebra of subsets of $\Rn$
generated by the half-intervals $[u,v)$.
A function $f:I\rightarrow\Rn$ is called $(\mathscr A,\mathscr A(\Rn))$-measurable
if $f^{-1}(X)\in\mathscr A$ for every $X\in\mathscr A(\Rn)$.
Bounded $(\mathscr A,\mathscr A(\Rn))$-measurable functions are always
integrable. 
These functions are sufficient for our purposes.
A sequence $f_n$ of integrable real functions on $I$ converges to $f$
hazily (or in prabability) if for every $\epsilon>0$
$$\lim_n\mu\{i\in I:\ |f_n(i)-f(i)|>\epsilon\}=0.$$

\begin{proposition} \label{convergence} (\cite{Rao}, Th. 4.4.20)
Let $f_n$ be a sequence of integrable functions on $I$ such that
$\lim_{m,n\rightarrow \infty}\int |f_n-f_m| d\mu=0$.
Assume $f_n$ converges hazily to $f$. Then $f$ is integrable and
$\lim_n\int |f_n-f|d\mu=0$. In particular, $\lim_n\int f_n=\int f$.
\end{proposition}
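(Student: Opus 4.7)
My plan is to establish the statement in two stages: first verify that $f$ itself is integrable, then show the $L^1$-convergence $\int|f_n-f|d\mu\to 0$, from which $\lim_n\int f_n\,d\mu=\int f\,d\mu$ follows immediately by the triangle inequality for the integral.

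For the integrability of $f$, I would invoke the standard fact that every charge-integrable function is an $L^1$-limit of simple measurable functions. Pick simple $s_n$ with $\int|f_n-s_n|d\mu<1/n$. The triangle inequality then makes $\{s_n\}$ itself Cauchy in mean. Moreover $s_n\to f$ hazily, because $\{|s_n-f|>\epsilon\}\subseteq\{|s_n-f_n|>\epsilon/2\}\cup\{|f_n-f|>\epsilon/2\}$, and a Chebyshev-type estimate $\mu\{|s_n-f_n|>\epsilon/2\}\leqslant(2/\epsilon)\int|s_n-f_n|d\mu$ forces the first piece to $0$, while the second vanishes by hypothesis. Thus $f$ arises as the hazy limit of an $L^1$-Cauchy sequence of simple functions, hence is integrable by the very definition of integrability against a charge.

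For the norm convergence, set $g_n=f_n-f$, an integrable sequence converging hazily to $0$. Given $\epsilon>0$, choose $N$ with $\int|g_n-g_m|d\mu<\epsilon$ for all $n,m\geqslant N$. Let $A_n=\{|g_n|>\epsilon\}$. On $I\setminus A_n$ one has $\int_{I\setminus A_n}|g_n|d\mu\leqslant\epsilon\mu(I)$, while on $A_n$, the triangle inequality gives, for $n\geqslant N$, $\int_{A_n}|g_n|d\mu\leqslant\int|g_n-g_N|d\mu+\int_{A_n}|g_N|d\mu\leqslant\epsilon+\int_{A_n}|g_N|d\mu$. The remaining term vanishes as $n\to\infty$ because the integral is absolutely continuous with respect to $\mu$: for any simple $s$ with $\int|g_N-s|d\mu<\epsilon$ one has $\int_{A_n}|g_N|d\mu\leqslant\|s\|_\infty\mu(A_n)+\epsilon$, and $\mu(A_n)\to 0$ by hazy convergence. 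Letting first $n\to\infty$ and then $\epsilon\to 0$ yields $\int|g_n|d\mu\to 0$. The main obstacle is precisely this absolute-continuity lemma, which in the countably additive setting is an easy consequence of dominated convergence but here must be obtained directly via simple-function approximation; once it is available, the rest of the argument is the usual $L^1$ completeness trick adapted to a finitely additive setting.
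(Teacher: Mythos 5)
The paper itself contains no proof of this proposition: it is imported verbatim from the cited source (Rao--Rao, \emph{Theory of Charges}, Th.~4.4.20), so there is no internal argument to compare against. Your proposal essentially reconstructs the standard proof of that theorem: approximate each $f_n$ in mean by a simple function $s_n$, check that $(s_n)$ is Cauchy in mean and converges hazily to $f$, conclude integrability of $f$ from the very definition of the charge (Dunford-type) integral, and then prove mean convergence. The outline is sound and the Chebyshev-plus-inclusion argument in your first stage is the right mechanism.

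Where you are loose is exactly where the finitely additive setting demands care. First, level sets such as $A_n=\{|f_n-f|>\epsilon\}$ and $\{|s_n-f_n|>\epsilon/2\}$ need not lie in $\mathscr A$ when $\mathscr A$ is only an algebra (even differences of $(\mathscr A,\mathscr A(\Rn))$-measurable functions can fail to be measurable), so hazy convergence and the Chebyshev bound must be phrased with the induced outer charge $\mu^*$, and Chebyshev for non-simple integrands has to be deduced from the simple case via a determining sequence. Second, and more substantively, your Stage 2 splitting $\int|g_n|\,d\mu=\int_{A_n}|g_n|\,d\mu+\int_{I\setminus A_n}|g_n|\,d\mu$ and the restricted integral $\int_{A_n}|g_N|\,d\mu$ presuppose that $A_n$ is measurable and that $\chi_{A_n}g_N$ is integrable; neither is automatic here. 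This can be repaired by cutting along level sets of simple approximants (with the approximation error chosen small relative to $\|s\|_\infty$), but the cleaner, standard route bypasses the splitting and your ``absolute-continuity lemma'' altogether: once $f$ is integrable with determining sequence $(s_n)$, the sequence $(|s_m-s_n|)_m$ determines $|f-s_n|$, hence $\int|f-s_n|\,d\mu=\lim_m\int|s_m-s_n|\,d\mu\leqslant\sup_{m\geqslant n}\int|s_m-s_n|\,d\mu$, which is small by mean-Cauchyness, and then $\int|f_n-f|\,d\mu\leqslant\int|f_n-s_n|\,d\mu+\int|s_n-f|\,d\mu\rightarrow0$. So: right strategy and essentially the textbook argument, but the genuinely delicate point is justifying (or avoiding) the set-wise manipulations, not the absolute-continuity step you flag as the main obstacle.
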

\bigskip

Let $(I,\mathscr A ,\mu)$ be a charge space and $f:I\rightarrow J$ a map.
Define a charge $\nu$ on $J$ by setting
$$\hspace{10mm} \nu(X)=\mu(f^{-1}(X))\hspace{16mm}
\mbox{if}\ X\subseteq J\ \mbox{and}\ f^{-1}(X)\in\mathscr A .$$
We write $\nu=f(\mu)$.
If $\mu$, $\nu$ are charges on $I$ and $J$ respectively and $\nu=f(\mu)$,
we write $\nu\leqslant\mu$.
This defines a partial pre-ordering on the class of charges (or ultracharges)
which generalizes the Rudin-Keisler ordering on ultrafilters (see \cite{Comfort-Negrepontis}).
By change of variables formula, if $\nu=f(\mu)$, then for each bounded integrable
$h:J\rightarrow\Rn$ one has that $\int hd\nu=\int h\circ fd\mu$.

Let $J$ be an infinite index set and for each $r\in J$, $(I_r,\mu_r)$ be an ultracharge space.
Let $\mathcal{I}=\prod_{r}I_r$. A subset $\prod_{r\in J}X_r\subseteq \mathcal{I}$
is called a cylinder if $X_r=I_r$ for all except finitely many $r$.
Let $\mathscr C$ be the Boolean algebra generated by cylinders.
Equivalently, $\mathscr C$ is generated by sets of the form $\pi^{-1}(X_r)$
where $\pi_r:\mathcal I\rightarrow I_r$ is the projection map and $X\subseteq I_r$.
Define a charge $\xi$ by first setting
$$\xi(\prod_{r\in J}X_r)=\prod_{r\in J}\mu_r(X_r)$$
and then extending to $\mathscr C$ in the natural way.
We call $\xi$ the \emph{cylinder charge}.
It is clear that $\mu_r\leqslant\xi$ for all $r$.
Extending $\xi$ to an ultracharge $\mu$, we obtain the following.

\begin{lemma} \label{charge product}
For each $r\in J$, let $\mu_r$ be an ultracharge on a set $I_r$.
Then there exists an ultracharge $\mu$ on $\mathcal I$ such that
$\mu_r\leqslant\mu$ for every $r\in J$.
\end{lemma}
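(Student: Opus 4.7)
The plan is to follow the construction outlined in the paragraph preceding the statement: define the cylinder charge $\xi$, verify that it is a well-defined finitely additive probability charge on the Boolean algebra $\mathscr C$, and then invoke the ultracharge extension theorem above to obtain a total extension $\mu$ on $P(\mathcal I)$. The required inequality $\mu_r\leqslant\mu$ will then fall out by direct computation on cylinders.

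First I would address well-definedness of $\xi$. For a cylinder $\prod_{r\in J}X_r$, the product $\prod_{r\in J}\mu_r(X_r)$ reduces to a finite product, since $\mu_r(I_r)=1$ for every $r$ and $X_r=I_r$ off a finite set; so it has a definite value in $[0,1]$. The cylinders form a semi-algebra inside $P(\mathcal I)$: an intersection $\prod X_r\cap\prod Y_r=\prod(X_r\cap Y_r)$ is again a cylinder, and the complement of a cylinder is a finite disjoint union of cylinders, obtained by splitting on each of the finitely many coordinates where $X_r\neq I_r$. Consequently $\mathscr C$ consists of finite disjoint unions of cylinders, and $\xi$ extends by finite additivity. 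The non-trivial check is that if the same element of $\mathscr C$ is written two ways as a disjoint union of cylinders, the two sums of $\xi$-values agree. I would handle this by refining both decompositions to a common coordinate-wise partition and applying finite additivity of each $\mu_r$ together with distributivity of multiplication over addition.

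Once $\xi$ is known to be a charge on $\mathscr C$, the ultracharge extension theorem above, applied to the charge space $(\mathcal I,\mathscr C,\xi)$ (whose hypothesis is trivially satisfied, for instance by any $X\in\mathscr C$), yields an ultracharge $\mu$ on $P(\mathcal I)$ extending $\xi$. For any $r\in J$ and any $X\subseteq I_r$, the preimage $\pi_r^{-1}(X)=X\times\prod_{s\neq r}I_s$ is a cylinder lying in $\mathscr C$, so
\[
\mu(\pi_r^{-1}(X))=\xi(\pi_r^{-1}(X))=\mu_r(X)\cdot\prod_{s\neq r}\mu_s(I_s)=\mu_r(X).
\]
This says $\pi_r(\mu)=\mu_r$, which is exactly $\mu_r\leqslant\mu$ in the sense defined above.

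The main obstacle is the finite-additivity check on $\mathscr C$; not because it is deep, but because the combinatorics of writing disjoint unions of cylinders via coordinate-wise refinements must be organized carefully. Everything else in the argument is a direct application of the cited extension theorem together with unpacking the definition of $\nu=f(\mu)$.
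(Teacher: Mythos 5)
Your proposal is correct and follows essentially the same route as the paper: define the cylinder charge $\xi$ on the algebra $\mathscr C$ generated by cylinders and extend it to an ultracharge on $P(\mathcal I)$ via the ultracharge extension theorem, with $\mu_r\leqslant\mu$ then following from $\mu(\pi_r^{-1}(X))=\mu_r(X)$. The only difference is that you spell out the semi-algebra/finite-additivity verification that the paper leaves implicit, which is a welcome but routine addition.
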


The notion of inverse limit of measures is a well-studied notion in the literature.
Here, we deal with a similar case, the inverse limit of ultracharges.
We consider a special case. Let $(J,<)$ be a linearly ordered
set and $\mu_r$ be an ultracharge on $I_r$ for each $r\in J$.
Assume for each $r\leqslant s$ there exists a surjective map $f_{rs}:I_s\rightarrow I_r$
such that $\mu_r=f(\mu_s)$. Also assume that $f_{rr}=id$ and
$f_{rt}=f_{rs}\circ f_{st}$ whenever $r\leqslant s\leqslant t$.
Let $$\mathbf{I}=\{(i_r)\in\mathcal I|\ \ \ f_{rs}(i_s)=i_r\ \ \ \ \forall\ r\leqslant s\}.$$

\begin{lemma} \label{inverse limit}
There exists a charge $(\mathscr A ,\mu)$ on $\mathbf I$
such that $\mu_r\leqslant\mu$ for all $r\in J$.
\end{lemma}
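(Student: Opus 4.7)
The strategy is to build the Boolean algebra $\mathscr A$ of subsets of $\mathbf I$ and the charge $\mu$ on it piece by piece from the $\mu_r$, using the linearity of $(J,<)$ to glue things together. The key observation is that by the definition of $\mathbf I$ one has $\pi_r=f_{rs}\circ\pi_s$ on $\mathbf I$ whenever $r\leqslant s$, hence
$$\pi_r^{-1}(X)=\pi_s^{-1}(f_{rs}^{-1}(X))\qquad\text{for all }r\leqslant s\text{ and }X\subseteq I_r.$$

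For each $r\in J$ set $\mathscr A_r=\{\pi_r^{-1}(X):X\subseteq I_r\}\subseteq P(\mathbf I)$; this is a Boolean subalgebra of $P(\mathbf I)$. The displayed identity together with the linearity of $J$ shows $\mathscr A_r\subseteq\mathscr A_s$ for $r\leqslant s$, so $\mathscr A=\bigcup_{r\in J}\mathscr A_r$ is itself a Boolean algebra. I would then define $\mu$ on $\mathscr A$ by $\mu(\pi_r^{-1}(X))=\mu_r(X)$. Compatibility across different $r$ is exactly the hypothesis $\mu_r=f_{rs}(\mu_s)$, since this gives $\mu_s(f_{rs}^{-1}(X))=\mu_r(X)$. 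Finite additivity of $\mu$ reduces, for any finite family of elements of $\mathscr A$, to finite additivity of $\mu_s$ after pulling all the sets back into a single large enough $\mathscr A_s$. The required conclusion $\mu_r\leqslant\mu$, i.e.\ $\mu_r=\pi_r(\mu)$, is then immediate from the definition.

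The main obstacle is well-definedness of the rule $\mu(\pi_r^{-1}(X))=\mu_r(X)$: if $\pi_r^{-1}(X)=\pi_r^{-1}(X')$ for distinct $X,X'\subseteq I_r$, then the rule forces $\mu_r(X)=\mu_r(X')$. The cleanest way to ensure this is to show that each projection $\pi_r\colon\mathbf I\to I_r$ is surjective. Given $i\in I_r$, I would produce a thread $(i_t)_{t\in J}\in\mathbf I$ through $i$ by setting $i_t=f_{tr}(i)$ for $t\leqslant r$, and constructing the values for $t\geqslant r$ via Zorn's Lemma on partial compatible threads, using surjectivity of the bonding maps $f_{rs}$ to carry out each extension. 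The most delicate situation in this Zorn argument is extending above a chain of indices lacking a greatest element in the partial domain, where one is forced to intersect a descending family of nonempty fibers; handling this case carefully, and using the interplay between the linear order on $J$ and the choices already made, is the main technical obstacle.
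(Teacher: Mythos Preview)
Your construction of $\mathscr A=\bigcup_r\mathscr A_r$ and of $\mu$ via $\mu(\pi_r^{-1}(X))=\mu_r(X)$ is exactly the paper's; the paper's proof consists of two sentences that simply assert this $\mu$ exists, without addressing well-definedness. You have gone further than the paper by isolating well-definedness as the real issue and observing that surjectivity of each $\pi_r$ would settle it.

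The gap is in your Zorn argument for surjectivity. What you call ``the main technical obstacle'' is not merely technical: when a maximal partial thread has domain $D$ with no greatest element and some $s\in J$ lies strictly above $D$, extending requires a point of $\bigcap_{r\in D}f_{rs}^{-1}(i_r)$, and nothing in the hypotheses---linearity of $J$, surjectivity of the $f_{rs}$, or compatibility of the $\mu_r$---forces this intersection to be nonempty. There are standard examples of inverse systems over $\omega_1$ with surjective bonding maps whose inverse limit $\mathbf I$ is empty, and one can equip such systems with compatible ultracharges by the same extend-and-complete procedure used for $\mu$ itself; so in that generality the lemma cannot be proved and your Zorn argument cannot be repaired. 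In the paper's only application (Theorem~\ref{saturated}) the system is built so that each successor step is a product $J_{\alpha+1}=J_\alpha\times I_{\alpha+1}$ with $f_{\alpha,\alpha+1}$ the coordinate projection, and each limit stage $J_\lambda$ is \emph{defined} as the inverse limit of the earlier $J_\alpha$. With that additional structure any partial thread extends automatically (at successors pick any new coordinate; at limits the thread already \emph{is} an element of $J_\lambda$), so each $\pi_r$ is surjective and your argument goes through.
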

\begin{proof}
Let $\pi_r: \mathbf{I}\rightarrow I_r$ be the projection map.
Let $\mathscr A $ be the subalgebra of $P(\mathbf I)$ generated by sets of the form
$\pi_r^{-1}(X)$ where $X\subseteq I_r$.
Then, there exists a charge $\mu$ on $\mathscr A $
such that $\mu(\pi_r^{-1}(X))=\mu_r(X)$ for all $r$ and $X\subseteq I_r$.
Furthermore, $\mu_r\leqslant\mu$ for each $r$.
\end{proof}


Let $(I,\mathscr A ,\mu)$ be a charge space and $(J,\mathscr B,\nu)$ be an ultracharge space.
For $A\subseteq I\times J$ and $r\in J$ let $A_r=\{i:\ (i,r)\in A\}$.
Let $$\mathscr D=\{A\subseteq I\times J|\ \ \ \forall r\  A_r\in\mathscr A \}.$$
$\mathscr D$ is a Boolean algebra of subsets of $I\times J$ and it is $\kappa$-complete
(resp. the whole power set) if $\mathscr A $ is so.
Since $\nu$ is an ultracharge, we may define a probability charge
on $(I\times J,\mathscr D)$ by setting
$$\wp(A)=\int\mu(A_r)d\nu\ \hspace{10mm} \forall A\in\mathscr D.$$
We denote $\wp$ by $\mu\otimes\nu$.
Note that $\mu,\nu\leqslant\mu\otimes\nu$ via the projection maps.
Then, a one sided Fubini theorem holds.

\begin{lemma}\label{Fubini}
For every bounded $(\mathscr D,\mathscr A(\Rn))$-measurable $f:I\times J\rightarrow\Rn$,
$$\int f(i,j)d\wp=\iint f(i,j)d\mu d\nu.$$
\end{lemma}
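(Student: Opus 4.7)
The plan is the standard three-step bootstrap from indicators to simple functions to bounded measurable functions, taking advantage of the fact that charge integrals are well-behaved on bounded measurable functions even though limit theorems are weak in general.

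\emph{Step 1 (Indicators).} Let $A\in\mathscr D$ and set $f=\chi_A$. Then for fixed $r\in J$ the slice $\chi_A(\cdot,r)=\chi_{A_r}$, so $\int\chi_A(i,r)\,d\mu(i)=\mu(A_r)$. Consequently
\[
\iint\chi_A(i,j)\,d\mu\,d\nu=\int\mu(A_r)\,d\nu(r)=\wp(A)=\int\chi_A\,d\wp,
\]
which is just the definition of $\wp$. Here the function $r\mapsto\mu(A_r)$ is bounded by $1$, and it is automatically $\mathscr B$-measurable because $\nu$ is an ultracharge, hence $\mathscr B=P(J)$; so the outer integral makes sense.

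\emph{Step 2 (Simple functions).} For $g=\sum_{k=1}^N c_k\chi_{A_k}$ with $A_k\in\mathscr D$, linearity of the charge integral on each side and Step~1 applied to each $A_k$ give the identity for $g$. Note that the inner integral $r\mapsto\int g(i,r)\,d\mu(i)=\sum_k c_k\mu(A_{k,r})$ is bounded, hence $\nu$-integrable.

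\emph{Step 3 (Bounded measurable $f$).} Suppose $|f|\leqslant M$ and fix $\epsilon>0$. Partition $[-M,M)$ into finitely many half-open intervals $I_1,\dots,I_N$ of length $<\epsilon$, pick $c_k\in I_k$, and set $g=\sum_k c_k\chi_{f^{-1}(I_k)}$. Measurability of $f$ ensures $f^{-1}(I_k)\in\mathscr D$, and by construction $\|f-g\|_\infty<\epsilon$. Because both $\wp$ and $\mu$ are probability charges, this gives $\bigl|\int(f-g)\,d\wp\bigr|<\epsilon$ and, uniformly in $r$, $\bigl|\int(f(\cdot,r)-g(\cdot,r))\,d\mu\bigr|<\epsilon$; integrating the latter against $\nu$ yields $\bigl|\iint(f-g)\,d\mu\,d\nu\bigr|<\epsilon$. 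Combined with Step~2 for $g$, we get
\[
\Bigl|\int f\,d\wp-\iint f\,d\mu\,d\nu\Bigr|<2\epsilon,
\]
and letting $\epsilon\to 0$ finishes the proof.

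The only subtlety worth flagging is that in a general charge setting one cannot appeal to monotone or dominated convergence, so the passage from simple functions to bounded measurable ones must be done by \emph{uniform} approximation; this is why the boundedness hypothesis on $f$ is essential. The ultracharge assumption on $\nu$ is likewise used in exactly one place, namely to guarantee without further argument that the slice-integral function $r\mapsto\int f(i,r)\,d\mu(i)$ is $\mathscr B$-measurable, so that the outer integral is defined. Neither of these points presents a genuine obstacle, which is why the one-sided Fubini statement here is weaker than the classical two-sided one.
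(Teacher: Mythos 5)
Your proof is correct and follows essentially the same route as the paper: verify the identity on indicators of sets in $\mathscr D$ by the definition of $\wp$, extend by linearity to simple functions, and then approximate a bounded measurable $f$ uniformly by simple functions built from level sets $f^{-1}[u,v)$, noting that the error passes uniformly through the inner $\mu$-integral. The only cosmetic difference is that you carry out the final limit by explicit $\epsilon$-estimates, whereas the paper invokes its convergence result (Proposition \ref{convergence}) for the sequence $f_n$; your observations about where boundedness and the ultracharge hypothesis on $\nu$ are used match the paper's setup.
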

\begin{proof} By definition, the claim holds for every $\chi_A$ where $A\in\mathscr D$.
So, it holds for simple functions too.
Let $f$ be as above with range contained in the interval $(-u,u)$.
Let $$\hspace{10mm} f_n(i,j)=\sum_{k=-n}^{n}\frac{k}{n}u\cdot\chi_{A_{nk}}(i,j) \hspace{14mm}
\mbox{where}\ \ A_{nk}=f^{-1}[\frac{k}{n}u, \frac{k+1}{n}u)\in\mathscr D.$$
Then, $f_n$ tends to $f$ uniformly.
Also, for each fixed $j$, $f_n(i,j)$ tends to $f(i,j)$ uniformly and
$$\Big|\int f_n(i,j)d\mu-\int f(i,j)d\mu\Big|\leqslant
\int\big|f_n(i,j)-f(i,j)\big|d\mu\leqslant\frac{u}{n}$$
which shows that $\int f_n(i,j)d\mu$ tends to $\int f(i,j)d\mu$ uniformly on $J$.
So, by Proposition \ref{convergence}
$$\int f(i,j)d\wp=\lim_n\int f_nd\wp=\lim_n\iint f_n(i,j)d\mu d\nu
=\iint\lim_n f_n(i,j)d\mu d\nu=\iint fd\mu d\nu.$$
\end{proof}

A consequence of the lemma is that $\mu\otimes(\nu\otimes\wp)=(\mu\otimes\nu)\otimes\wp$
for any ultracharges $\mu,\nu,\wp$.


\section{Ultramean and powermean} \label{section4}
The linear variant of the ultraproduct construction is the \emph{ultramean construction}.
Let $(M_{i}, d_{i})_{i\in I}$ be a family of $\tau$-structures.
Let $\mu$ be an ultracharge on $I$.
First define a pseudo-metric on $\prod_{i\in I}M_i$ by setting
$$d(a,b)=\int d_{i}(a_i,b_i)d\mu.$$
Clearly, $d(a,b)=0$ is an equivalence relation.
The equivalence class of $a=(a_i)$ is denoted by $[a_i]$.
Let $M$ be the set of these equivalence classes.
Then, $d$ induces a metric on $M$ which is again denoted by $d$.
So, $d([a_i],[b_i])=\int d_{i}(a_i,b_i)d\mu$.
Define a $\tau$-structure on $(M,d)$ as follows:
$$c^M=[c^{M_i}]$$
$$F^M([a_i],...)=[F^{M_i}(a_i,...)]$$
$$R^M([a_i],...)=\int R^{M_i}(a_i,...)d\mu.$$
where $c,F,R\in \tau$.
One verifies that $F^M$ and $R^M$ are well-defined as well as $\lambda_F$-continuous
and $\lambda_R$-continuous respectively.
For example, assume $R$ is unary. For every $a=[a_i]$ and $b=[b_i]$ one has that
$$R^{M_i}(a_i)-R^{M_i}(b_i)\leqslant\lambda_R(d(a_i,b_i))\ \ \ \ \ \ \forall i.$$
So, by integrating and using Jensen's inequality, one has that
$$R^{M}(a)-R^{M}(b)\leqslant\int\lambda_R(d(a_i,b_i))d\mu
\leqslant\lambda_R\Big(\int d(a_i,b_i)d\mu\Big)=\lambda_R(d(a,b)).$$

The structure $M$ is called the ultramean of structures $M_i$ and is denoted by $\prod_\mu M_i$.
Note that an ultrafilter $\mathcal F$ corresponds to the $0-1$ valued ultracharge
$\mu$ where $\mu(A)=1$ if $A\in\mathcal F$ and $\mu(A)=0$ otherwise.
In this case, $\prod_\mu M_i$ is exactly the ultraproduct $\prod_{\mathcal F}M_i$
and by {\L}o\'{s} theorem, for every CL sentence $\sigma$ in $\tau$
one has that $\sigma^M=\lim_{i,\mathcal F}\sigma^{M_i}$.
In the ultracharge case, we have the following variant of {\L}o\'{s} theorem
(see \cite{Bagheri-Lip}).

\begin{theorem} (Ultramean) \label{th1}
For every linear formula $\phi(x_{1}, \ldots, x_{n})$
and $[a^{1}_{i}], \ldots, [a^{n}_{i}]\in M$
$$\phi^{M}([a^{1}_{i}],\ldots, [a^{n}_{i}])=
\int\phi^{M_{i}}(a^{1}_{i},\ldots, a^{n}_{i})d\mu.$$
\end{theorem}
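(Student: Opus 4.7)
The plan is to proceed by structural induction on the linear formula $\phi$. Before handling formulas, an auxiliary induction on term complexity is needed, showing $t^M([\bar a_i])=[t^{M_i}(\bar a_i)]$ for every $\tau$-term $t$; this follows at once from the coordinate-wise definition of $F^M$, whose well-definedness via $\lambda_F$-continuity and Jensen's inequality has already been recorded in the paper.

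With the term identity in hand, the atomic and propositional cases unfold directly from the definitions. The constant case $\phi=r$ is $\int r\, d\mu=r$; the case $\phi=d(t_1,t_2)$ and the case $\phi=R(t_1,\dots,t_n)$ are just the definitions of the ultramean metric and of $R^M$, once the term identity is substituted. The connective cases $r\phi$ and $\phi+\psi$ reduce to the induction hypothesis by linearity of the integral; since $\mathscr L^1$ omits $\wedge$ and $\vee$, no further propositional case arises.

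The one substantive step, and the one I expect to be the main obstacle, is the quantifier case $\sup_x\phi$ (the case $\inf_x\phi$ is symmetric, or obtained by replacing $\phi$ with $-\phi$). By the induction hypothesis, for every $[a_i]\in M$ and parameter tuple $\bar b=([b^2_i],\dots,[b^n_i])$,
$$\phi^M([a_i],\bar b)=\int \phi^{M_i}(a_i,\bar b_i)\, d\mu,$$
so the statement to verify reduces to
$$\sup_{[a_i]\in M}\int \phi^{M_i}(a_i,\bar b_i)\, d\mu \;=\; \int \sup_{a\in M_i}\phi^{M_i}(a,\bar b_i)\, d\mu.$$
The direction $\leqslant$ is a pointwise bound followed by integration. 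For $\geqslant$, I would fix $\epsilon>0$ and use the axiom of choice to pick, for each $i\in I$, a point $a_i\in M_i$ with $\phi^{M_i}(a_i,\bar b_i)\geqslant \sup_{a\in M_i}\phi^{M_i}(a,\bar b_i)-\epsilon$, then integrate and let $\epsilon\to 0$. The delicate point here is that the function $i\mapsto \phi^{M_i}(a_i,\bar b_i)$ produced by such a choice is not measurable-by-construction; the argument goes through because $\mu$ is an \emph{ultracharge}, i.e.\ defined on all of $P(I)$, so every bounded real function on $I$ is automatically $(\mathscr A,\mathscr A(\Rn))$-measurable and hence integrable, and $[a_i]$ is then a bona fide witness in $M$. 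This is precisely the place where the passage from ultrafilters to ultracharges is used, and where the argument would fail for a charge defined on a proper subalgebra.
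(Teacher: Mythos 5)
Your proof is correct. Note, however, that the paper does not actually prove Theorem \ref{th1}: it quotes it from \cite{Bagheri-Lip}, so the fair in-paper comparison is with the proof of the powermean theorem (Theorem \ref{powermean}), which runs the same induction in the harder setting where the charge lives on a subalgebra $\mathscr A\subseteq P(I)$ and tuples must be $\mathscr A$-measurable. There an arbitrary choice of $\epsilon$-witnesses is not available, and the quantifier step is settled instead by the Kuratowski--Ryll-Nardzewski theorem, which produces a \emph{measurable} tuple $b$ with $\sup_y\phi^M(a_i,y)-\epsilon\leqslant\phi^M(a_i,b_i)$ for all $i$. Your handling of the $\sup$ case --- choose witnesses pointwise by the axiom of choice and observe that, since $\mu$ is an ultracharge on all of $P(I)$, every bounded real function on $I$ is automatically $(P(I),\mathscr A(\Rn))$-measurable, hence integrable, and the chosen tuple is a genuine element of $M$ --- is exactly the standard {\L}o\'{s}-style argument for ultrameans, and it isolates precisely the point the paper itself stresses (``the main reason one considers maximal charges in the ultramean theorem is the integrability of formulas''), i.e.\ the place where the argument would break for a charge on a proper subalgebra. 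Two minor points are worth making explicit in a final write-up: the approximate witnesses exist because each $\phi^{M_i}$ is bounded by $\mathbf b_\phi$, so the pointwise suprema are finite; and the $\inf$ case does reduce to the $\sup$ case via $\inf_x\phi=-\sup_x(-\phi)$, which stays inside $\mathscr L^1$ because $(-1)\phi$ is again a linear formula.
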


If $I=\{1,2\}$ and $\mu(1)=\varepsilon$, $\mu(2)=1-\varepsilon$ where $\varepsilon\in[0,1]$,
the ultramean of $(M_i)_{i\in I}$ is denoted by $M=\varepsilon M_1+(1-\varepsilon)M_2$.
In this case, for each linear sentence $\sigma$ we have that
$$\sigma^M=\varepsilon\sigma^{M_1}+(1-\varepsilon)\sigma^{M_2}.$$
Again, using this, one shows that if both $\sigma\leqslant0$
and $0\leqslant\sigma$ are satisfiable, then $\sigma=0$ is satisfiable.
We promised in section one to give a proof of the linear compactness theorem.

Let $T$ be a linearly satisfiable theory in $\tau$. It is not hard to see that for every
sentence $\sigma$, one of the theories $T\cup\{0\leqslant\sigma\}$
and $T\cup\{\sigma\leqslant0\}$ is linearly satisfiable.
So, there is a maximal linearly satisfiable theory containing $T$.
Assume $T$ is a maximal. By the ultraproduct method, if every
$\frac{1}{n}\leqslant\sigma$ is satisfiable, then so is $0\leqslant\sigma$.
As a consequence , for every $\sigma$ there is a unique $r$
such that $\sigma=r$ belongs to $T$.
We denote this unique $r$ by $T(\sigma)$.
In this way, $T$ can be regarded as a linear function on the set of sentences
such that $\sigma=r$ is satisfiable whenever $T(\sigma)=r$.
Let $\mathbb D$ the set of equivalence classes of $\tau$-sentences where
$\sigma\equiv\eta$ if $\sigma^M=\eta^M$ for all $M$.
Then, $\mathbb D$ is partially ordered by $\sigma\leqslant\eta$ if
$\sigma^M\leqslant\eta^M$ for all $M$ and also normed by
$$\|\sigma\|=\sup\{\sigma^M| \ \ M\ \textrm{a}\ \tau\textrm{-structure}\}.$$
In fact, any maximal $T$ can be regarded as a positive linear functional on $\mathbb D$
such that $\sigma=r$ is satisfiable whenever $T(\sigma)=r$.
In particular, if $\sigma\equiv\eta$ then $\sigma-\eta=r$ belongs to $T$ for some $r$.
Since this condition has a model, we conclude that $r=0$ and hence $T(\sigma)=T(\eta)$.
Equivalently, maximal linearly satisfiable theories correspond to linear functionals $f$
on $\mathbb D$ such that $\|f\|_\infty=1=f(1)$.
\bigskip

\noindent{\bf Proof of Theorem \ref{compactness}}:\\
There is a set $\{M_i\}_{i\in I}$ containing a model from each
equivalence class of the relation $M\equiv N$.
So, for every $\tau$-structure $M$, there exists a unique $M_i\in I$
such that $M\equiv M_i$. We put the discrete topology on $I$.
So, we may write $\Rn\subseteq\mathbb D\subseteq\mathbf{C}_b(I)$ if we identify
$\sigma\in\mathbb D$ with the map $i\mapsto\sigma^{M_i}$.
Let $T$ be a linearly satisfiable theory which we may assume maximal
with this property. So, $T$ may be regarded as a positive linear functional on $\mathbb D$.
Since $\mathbb D$ is majorizes $\mathbf{C}_b(I)$, by Kantorovich
extension theorem (\cite{Aliprantis-Inf}, Th. 8.32),
$T$ is extended to a positive linear functional $\bar T$ on $\mathbf{C}_b(I)$.
By (a variant of) Riesz representation theorem
(see \cite{Aliprantis-Inf}, Th.14.9) 
there exists probability charge $\mu$ on $I$
such that for every $\sigma$ $$T(\sigma)=\bar{T}(\sigma)=\int_I\sigma^{M_i}d\mu.$$
Extend $\mu$ to an ultracharge $\bar\mu$  and let $N=\prod_{\bar\mu}M_i$.
Then, $T(\sigma)=\sigma^N$ for every $\sigma$ and hence $N$ is a model of $T$. $\square$
\vspace{6mm}

The proof can be arranged to show that if $T$ is linearly satisfiable in a class
$\mathcal K$ of $\tau$-structures, then $T$ has a model of the form $\prod_\mu M_i$
where every $M_i$ belongs to $\mathcal K$.
Using this, one proves that a class $\mathcal K$ of structures is axiomatizable
if and only if it is closed under elementary equivalence and ultramean.

If $M_i=N$ for all $i$, the ultramean structure is denoted by $N^\mu$
and is called \emph{ultra-powermean} (or maximal powermean) of $N$.
Note that if $|N|\geqslant2$ and $\mu$ is not an ultrafilter,
then $N^\mu$ is a proper extension of $N$. Since no compact model
has a proper elementary extension in the CL sense, we conclude that
$\equiv$ is strictly weaker than $\equiv_{CL}$.

The main reason one considers maximal charges in the ultramean theorem is
the integrability of formulas.
The maximal powermean construction has some more flexibilities in this respect and
one may obtain similar results if the tuples are suitably chosen.

Let $M$ be a $\tau$-structure and $(I,\mathscr A,\mu)$ a charge space.
A map (or tuple) $a:I\rightarrow M$ is called \emph{measurable} if
$a^{-1}(X)\in\mathscr A$ for every Borel $X\subseteq M$.
Note that, for a continuous $f:M^n\rightarrow\Rn$ and measurable
tuples $a^1,...,a^n$, the map $f(a^1_i,...,a^n_i)$
may not be $(\mathscr A,\mathscr A(\Rn))$-measurable.
It is however measurable if $density(M)\leqslant\kappa$ and
$\mathscr A $ is $\kappa^+$-complete.
In this case, we say that $M$ is $\mathscr A $-\emph{meanable}.
This includes the case $\mathscr A =P(I)$ as well as the case $M$ is separable and
$\mathscr A $ is a $\sigma$-algebra.
In each of the above cases, let $a\sim b$ if $\int d(a_i,b_i)d\mu=0$.
The equivalence class of $a=(a_i)$ is denoted by $[a_i]$.
Let $M^{\mu}$ be the set of equivalence classes of all measurable maps $a:I\rightarrow M$.
$M^\mu$ is a $\tau$-structure in the natural way.
For example, if $F,R\in \tau$ are unary and $a$ is measurable, one defines
$$F^{M^\mu}([a_i])=[F^M(a_i)], \hspace{14mm} R^{M^\mu}([a_i])=\int R^M(a_i)d\mu.$$
Note also that if $\mu$ is an ultracharge, $M^\mu$ coincides with
the maximal powermean defined above.
To prove the powermean theorem for general $M^\mu$, we need a selection theorem.

Let $\mathscr A\subseteq P(I)$ be a Boolean algebra and $M$ a metric space.
A multifunction $G:I\rightarrow M$ is a map which assigns to each $i$
a nonempty $G(i)\subseteq M$. It is closed-valued if $G(i)$ is closed for each $i$.
It is $\mathscr A$-measurable if for every open $U\subseteq M$, the set
$$G^{-1}(U)=\{i\in I|\ G(i)\cap U\neq\emptyset\}$$
is $\mathscr A$-measurable.
A selection for $G$ is a function $g:I\rightarrow M$ such that $g(i)\in G(i)$ for every $i$.

\begin{theorem} (Kuratowski, Ryll-Nardzewski)
Let $M$ be complete separable metric space and $G:I\rightarrow M$
an $\mathscr A$-measurable closed-valued multifunction.
If $\mathscr A$ is countably complete, then $G$ admits a measurable selection.
\end{theorem}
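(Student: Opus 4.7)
The plan is to build the selection by successive approximation, producing a sequence $g_n : I \to M$ of measurable functions each taking countably many values, with $g_n(i)$ converging uniformly in the sense that the sequence is Cauchy at rate $O(2^{-n})$ and $d(g_n(i),G(i)) \to 0$. Then completeness of $M$ gives a pointwise limit $g(i)$, closedness of $G(i)$ forces $g(i) \in G(i)$, and countable completeness of $\mathscr A$ will transfer measurability from the $g_n$'s to $g$.

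Concretely, I would fix a countable dense set $\{x_k\}_{k\in\Nn}\subseteq M$ and start by defining $g_0(i) = x_k$ for the least $k$ with $G(i)\cap B(x_k,1)\neq\emptyset$; well-definedness comes from density and nonemptiness of $G(i)$, and the level set $\{i:g_0(i)=x_k\}$ is the difference of finitely many sets of the form $G^{-1}(B(x_k,1))$, which are $\mathscr A$-measurable by hypothesis. Inductively, given $g_n$ measurable with values in $\{x_k\}$ and satisfying $d(g_n(i),G(i))<2^{-n}$, I would define $g_{n+1}(i)=x_k$ for the least $k$ with
$$
d(x_k,g_n(i))<2^{-n}+2^{-n-1} \qquad\text{and}\qquad G(i)\cap B(x_k,2^{-n-1})\neq\emptyset.
$$
Such a $k$ exists because any witness $y\in G(i)$ with $d(y,g_n(i))<2^{-n}$ has a dense approximant $x_k$ within $2^{-n-1}$. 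The level set $\{i:g_{n+1}(i)=x_k\}$ is a Boolean combination (over finitely many smaller indices) of $\{i:d(x_k,g_n(i))<3\cdot 2^{-n-1}\}$ (measurable since $g_n$ is) and $G^{-1}(B(x_k,2^{-n-1}))$ (measurable by assumption), hence lies in $\mathscr A$.

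Then for every $i$, $d(g_{n+1}(i),g_n(i))<3\cdot 2^{-n-1}$, so $(g_n(i))$ is Cauchy in the complete space $M$ and converges to some $g(i)$. Since $d(g_n(i),G(i))<2^{-n}$ and $G(i)$ is closed, $g(i)\in G(i)$, so $g$ is a selection. For measurability it suffices to check preimages of open balls: for $x\in M$ and $r>0$,
$$
\{i:d(g(i),x)<r\}=\bigcup_{k\in\Nn}\bigcup_{N\in\Nn}\bigcap_{n\geqslant N}\{i:d(g_n(i),x)<r-1/k\},
$$
which belongs to $\mathscr A$ because each inner set does and $\mathscr A$ is closed under countable unions and intersections; since $M$ is separable, such balls generate the Borel $\sigma$-algebra, so $g$ is measurable.

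The main obstacle is the inductive step: one has to combine the Cauchy condition on $g_{n+1}$ relative to $g_n$ with the distance condition on $g_{n+1}$ relative to $G(i)$ in a single choice of index $k$, so that both the existence of such a $k$ and the measurability of the resulting level sets can be read off from the two hypotheses on $G$ (measurability and closed values) plus the dense enumeration. Once the constants are balanced as above, the rest is bookkeeping.
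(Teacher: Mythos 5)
Your proof is correct, and it is essentially the standard successive-approximation argument that the paper does not reprove but cites from \cite{Srivastava}: countably-valued measurable approximations $g_n$ with $d(g_n(i),G(i))<2^{-n}$ and geometric Cauchy control, whose limit lies in the closed set $G(i)$ by completeness, with countable completeness of $\mathscr A$ giving measurability of the level sets and of the limit. The same bookkeeping, with a dense set of size $\kappa$ and $\kappa^+$-completeness of $\mathscr A$, yields the generalized version the paper actually uses.
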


A proof of the above theorem can be found in \cite{Srivastava}. 
It is however not hard to see that the same proof works for any complete $M$
if $density(M)\leqslant\kappa$ and $\mathscr A$ is $\kappa^+$-complete.

Assume $M$ is a complete $\tau$-structure such that $density(M)\leqslant\kappa$
and $(I,\mathscr A,\mu)$ is a charge space where $\mathscr A$ is $\kappa^+$-complete.
Let $\lambda$ be a modulus and $u:M^2\rightarrow\Rn$ be a
$\lambda$-continuous function. Let $B_r(y)$ be the open ball of radius $r$ around $y$.
Then, it is easy to verify that
$$\inf_{t\in B_r(y)}u(x,t)=\inf_z\big[u(x,z)+\lambda\big(d(z,B_r(y))\big)\big].$$
Let $a:I\rightarrow M$ be measurable.
Fix $0<\epsilon<1$ and assume the set
$$G(i)=\{t\in M|\ \ u(a_i,t)<\epsilon\}$$
is nonempty. Then, for each $r>0$,
$$d(y,G(i))<r \ \ \Longleftrightarrow\ \ \inf_{t\in B_r(y)}u(a_i,t)<\epsilon$$
We deduce that for each $y$, the map $i\mapsto d(y,\overline{G(i)})$ is measurable.
Let $D\subseteq M$ be a dense set with $|D|\leqslant\kappa$ and $U\subseteq M$ be open.
For each $y\in D\cap U$ choose $r_y$ such that
$\frac{d(y, U^c)}{2}<r_y< d(y,U^c)$. Then $U=\bigcup_{y\in D\cap U} B_{r_y}(y)$ and
$$\{i|\ \overline{G(i)}\cap U\neq\emptyset\}=
\bigcup_y\{i|\ \overline{G(i)}\cap B_{r_y}(y)\neq\emptyset\}
=\bigcup_y\{i|\ d(y,\overline{G(i)})<r_y\}\in\mathscr A.$$
This shows that the multifunction $i\mapsto \overline{G(i)}$ is measurable.
Applying this for the function $u(x,y)=[\sup_y\phi^M(x,y)]-\phi^M(x,y)$, we conclude by
Kuratowski and Ryll-Nardzewski theorem that there is a measurable $b:I\rightarrow M$ such that
$$\sup_y\phi^M(a_i,y)-\epsilon\leqslant\phi^M(a_i,b_i)\hspace{12mm}\forall i\in I.$$


\begin{theorem} \label{powermean} (Powermean)
Let $(I,\mathscr A ,\mu)$ be a charge space and $M$ be a complete
$\mathscr A $-meanable structure. Let $N=M^{\mu}$.
Then, for each $\tau$-formula $\phi(\x)$ and $[a^1_i],...,[a^n_i]\in N$
$$\phi^{N}([a^1_i],...,[a^n_i])=\int\phi^M(a^1_i,...,a^n_i)d\mu.$$
\end{theorem}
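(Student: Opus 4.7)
The plan is to induct on the complexity of the linear formula $\phi(\x)$. The base cases---real constants, $d(t_1,t_2)$, and $R(t_1,\ldots,t_n)$---reduce to the defining clauses of the $\tau$-structure on $M^{\mu}$, together with a subinduction on terms showing that $t^N([a^1_i],\ldots)=[t^M(a^1_i,\ldots)]$. The cases $r\phi$ and $\phi+\psi$ are immediate from the inductive hypothesis and linearity of the integral. Throughout, $\mathscr A$-meanability is exactly what is needed to ensure that every integrand we encounter is $(\mathscr A,\mathscr A(\Rn))$-measurable, since each map $i\mapsto\phi^M(a^1_i,\ldots,a^n_i)$ is the composition of measurable tuples with a uniformly continuous function $\phi^M\colon M^n\to\Rn$.

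The substantive step is the quantifier case $\phi(\x)=\sup_y\psi(\x,y)$; the $\inf_y$ case is symmetric. One inequality is straightforward: for any measurable $b\colon I\to M$, the inductive hypothesis for $\psi$ gives
$$\psi^N([a^1_i],\ldots,[a^n_i],[b_i])=\int\psi^M(a^1_i,\ldots,a^n_i,b_i)\,d\mu\leqslant\int\sup_y\psi^M(a^1_i,\ldots,a^n_i,y)\,d\mu,$$
and taking the supremum over all $[b_i]\in N$ yields $\phi^N([a^1_i],\ldots)\leqslant\int\phi^M(a^1_i,\ldots)\,d\mu$. Note that the outer integrand is measurable because $x\mapsto\sup_y\psi^M(x,y)$ inherits a modulus of continuity from $\psi^M$, and $M$ is $\mathscr A$-meanable.

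For the reverse inequality, I invoke the measurable selection argument prepared in the paragraphs preceding the theorem. Fix $\epsilon>0$ and set $u(x,y)=[\sup_z\psi^M(x,z)]-\psi^M(x,y)$; this is $\lambda$-continuous in $y$ (for $\lambda$ depending on $\psi$). Consider the multifunction
$$G(i)=\{y\in M\colon u(a^1_i,\ldots,a^n_i,y)<\epsilon\},$$
which is nonempty for each $i$. The preamble to the theorem shows, via the identity $\inf_{t\in B_r(y)}u(x,t)=\inf_z[u(x,z)+\lambda(d(z,B_r(y)))]$, that $i\mapsto d(y,\overline{G(i)})$ is $\mathscr A$-measurable for every $y\in M$, so the closed-valued multifunction $i\mapsto\overline{G(i)}$ is $\mathscr A$-measurable. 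By Kuratowski--Ryll-Nardzewski (applicable because $M$ is complete and $\mathscr A$ is $\kappa^+$-complete for $\kappa\geqslant density(M)$), there is a measurable selection $b\colon I\to M$ with $\sup_y\psi^M(a^1_i,\ldots,y)-\epsilon\leqslant\psi^M(a^1_i,\ldots,b_i)$ for every $i$. Integrating and using the inductive hypothesis for $\psi$,
$$\int\sup_y\psi^M(a^1_i,\ldots,y)\,d\mu-\epsilon\leqslant\psi^N([a^1_i],\ldots,[b_i])\leqslant\phi^N([a^1_i],\ldots),$$
and letting $\epsilon\to 0$ yields the matching inequality.

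The main obstacle is precisely the measurable near-optimizer step: the trivial inequality is immediate from linearity of the integral, but producing a single measurable tuple $[b_i]\in N$ whose value approximates the pointwise supremum requires the selection machinery set up before the theorem statement. Once that machinery is in place, both inequalities collapse into the claimed equality.
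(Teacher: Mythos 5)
Your proof is correct and follows essentially the same route as the paper: induction on linear formulas, with the quantifier step handled by the Kuratowski--Ryll-Nardzewski selection machinery applied to $u(x,y)=[\sup_z\phi^M(x,z)]-\phi^M(x,y)$ to produce a measurable $\epsilon$-optimizer, the reverse inequality being immediate. Your added remarks on measurability of the integrands and the term subinduction only make explicit what the paper leaves implicit.
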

\begin{proof} Consider the case $N=M^{\mu}$.
Clearly, the claim holds for atomic formulas.
Also, if it holds for $\phi,\psi$, it holds for $r\phi+s\psi$ too.
Assume the claim is proved for $\phi(\x,y)$.
For simplicity assume $|\x|=1$.
Let $[a_i]\in M^{\mu}$ and $0<\epsilon<1$.
As stated above, there is a measurable $b$ such that
$$\sup_y\phi^M(a_i,y)-\epsilon\leqslant\phi^M(a_i,b_i)\hspace{14mm}\forall i\in I.$$
So,
$$\int\sup_y\phi^M(a_i,y)d\mu-\epsilon\leqslant\int\phi^M(a_i,b_i)d\mu=
\phi^{N}([a_i],[b_i])\leqslant\sup_y\phi^{N}([a_i],y)$$
and hence $$\int\sup_y\phi^M(a_i,y)d\mu\leqslant\sup_y\phi^{N}([a_i],y).$$
The inverse inequality is obvious. So, the claim holds for $\phi(x,y)$ too.
\end{proof}

\begin{corollary} \label{diagonal}
The diagonal map $a\mapsto[a]$ is an elementary embedding of $M$ into $M^\mu$.
\end{corollary}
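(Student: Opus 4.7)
The plan is to derive this directly from the Powermean Theorem just proved, since essentially everything is already in place once we identify the diagonal map as a special case of the general equivalence class construction.

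First, I would check that the diagonal map is well defined: for each $a\in M$, the constant map $\hat a:I\rightarrow M$ given by $\hat a(i)=a$ is trivially measurable, since the preimage of any set is either $\emptyset$ or $I$, both of which lie in $\mathscr A$. Hence $[\hat a]=[a]$ is a legitimate element of $M^\mu$.

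Second, I would apply Theorem \ref{powermean} to such constant tuples. Given any $\tau$-formula $\phi(x_1,\dots,x_n)$ and $a^1,\dots,a^n\in M$, the function $i\mapsto\phi^M(a^1,\dots,a^n)$ is itself a constant function on $I$, so
$$\phi^{M^\mu}([a^1],\dots,[a^n])=\int\phi^M(a^1,\dots,a^n)\,d\mu=\phi^M(a^1,\dots,a^n),$$
using that $\mu$ is a probability charge, i.e.\ $\mu(I)=1$. This is exactly the statement that the diagonal map preserves the value of every linear formula, which is the elementarity condition for $M\preccurlyeq M^\mu$.

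Third, I would verify that the map is an embedding of structures. Applying the identity just established to the distinguished binary relation $d$ gives $d^{M^\mu}([a],[b])=d^M(a,b)$, so the map is an isometry and in particular injective. For function and constant symbols, the structural definitions $F^{M^\mu}([a_i],\dots)=[F^M(a_i,\dots)]$ and $c^{M^\mu}=[c^M]$ applied to constant tuples immediately give $F^{M^\mu}([a],\dots)=[F^M(a,\dots)]$ and $c^{M^\mu}=[c^M]$, so the diagonal map commutes with the interpretation of every symbol.

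There is no real obstacle here; the corollary is essentially an unpacking of Theorem \ref{powermean} in the constant-tuple case, together with the normalization $\mu(I)=1$.
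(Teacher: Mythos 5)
Your proof is correct and follows exactly the route the paper intends: the corollary is an immediate consequence of Theorem \ref{powermean} applied to constant tuples, using $\mu(I)=1$, which is why the paper records it without a separate argument. Your additional checks (measurability of constant maps, compatibility with $d$, $F$, $c$) are exactly the routine unpacking this requires.
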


By change of variables theorem, if $\nu=f(\mu)$ under the map
$f:I\rightarrow J$, then the function $[a_i]_\nu\mapsto[a_{f(i)}]_\mu$
is an elementary embedding of $M^\nu$ into $M^{\mu}$.
In other words, if $\nu\leqslant\mu$ then $M^\nu\preccurlyeq M^\mu$.

\begin{proposition}\label{compose}
Let $(I,\mathscr A,\mu)$ be a charge space and $(J,\nu)$ be an ultracharge space.
If $M$ is complete and $\mathscr A$-meanable, then $M^{\mu\otimes\nu}\simeq(M^{\mu})^{\nu}$.
\end{proposition}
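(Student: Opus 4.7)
The plan is to exhibit an explicit elementary isomorphism
$$\Phi: M^{\mu\otimes\nu}\longrightarrow (M^{\mu})^{\nu}, \qquad [f]_{\mu\otimes\nu}\longmapsto \bigl[\,j\mapsto [f(\cdot,j)]_{\mu}\,\bigr]_{\nu},$$
and verify it is well-defined, elementary, and bijective using the Powermean theorem (\ref{powermean}) together with the one-sided Fubini lemma (\ref{Fubini}). First I would check that $\Phi$ is well-defined: if $f:I\times J\to M$ is $\mathscr D$-measurable, then by the very definition of $\mathscr D$ each slice $f(\cdot,j):I\to M$ is $\mathscr A$-measurable, so $[f(\cdot,j)]_{\mu}$ lives in $M^{\mu}$; and the resulting map $J\to M^{\mu}$ is trivially $P(J)$-measurable because $\nu$ is an ultracharge. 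To see that $\Phi$ respects the equivalence relations, note that $f\sim_{\mu\otimes\nu}f'$ means $\int d(f,f')\,d\wp=0$, which by Lemma \ref{Fubini} equals $\int\big(\int d(f(i,j),f'(i,j))\,d\mu\big)d\nu$, i.e.\ the $\nu$-integral of the $M^{\mu}$-distance between the slices, whence the representing $J$-maps are $\nu$-equivalent.

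Second, I would establish structure preservation by a direct computation. For any linear formula $\phi(\x)$ and measurable representatives, two applications of Theorem \ref{powermean} give
$$\phi^{(M^{\mu})^{\nu}}\!\bigl(\Phi[f^1],\ldots,\Phi[f^n]\bigr)=\int_{J}\phi^{M^{\mu}}\!\bigl([f^1(\cdot,j)],\ldots\bigr)d\nu=\iint\phi^{M}(f^1,\ldots,f^n)\,d\mu\,d\nu,$$
while a single application yields $\phi^{M^{\mu\otimes\nu}}([f^1],\ldots,[f^n])=\int\phi^{M}(f^1,\ldots,f^n)\,d\wp$, and these two agree by Lemma \ref{Fubini}. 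Taking $\phi=d$ yields injectivity of $\Phi$ for free. Surjectivity is handled by unfolding: given $[g]_{\nu}\in(M^{\mu})^{\nu}$, pick for each $j\in J$ a representative $\mathscr A$-measurable $h_j:I\to M$ of $g(j)\in M^{\mu}$ and set $f(i,j):=h_j(i)$; then for each open $U\subseteq M$ the section $(f^{-1}(U))_{j}=h_j^{-1}(U)\in\mathscr A$, so $f^{-1}(U)\in\mathscr D$, showing $f$ is $\mathscr D$-measurable and $\Phi[f]=[g]$.

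The main obstacle is ensuring the measurability bookkeeping lines up. Two points require care: (a) the product algebra $\mathscr D$ must be $\kappa^{+}$-complete where $\kappa=\mathrm{density}(M)$, so that $M$ is $\mathscr D$-meanable and Theorem \ref{powermean} is applicable to $M^{\mu\otimes\nu}$ — this holds because taking sections commutes with Boolean operations, so $\mathscr D$ inherits $\kappa^{+}$-completeness from $\mathscr A$; and (b) the surjectivity step invokes a simultaneous choice of measurable representatives $h_j$ indexed by $j\in J$, which is admissible (only sectionwise measurability, with no uniformity in $j$, is needed for $\mathscr D$-measurability). Everything else reduces to rewriting integrals via Fubini and applying the Powermean theorem term by term.
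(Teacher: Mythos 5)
Your proposal is correct and takes essentially the same route as the paper: the same slice map $[f]_{\mu\otimes\nu}\mapsto\bigl[\,j\mapsto[f(\cdot,j)]_{\mu}\,\bigr]_{\nu}$, with formula preservation obtained by combining Lemma \ref{Fubini} with the powermean/ultramean theorems. You simply fill in the well-definedness, injectivity and surjectivity details that the paper leaves as ``not hard to see,'' and your measurability bookkeeping for $\mathscr D$ agrees with the paper's remark in Section 2 that $\mathscr D$ inherits $\kappa^{+}$-completeness from $\mathscr A$.
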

\begin{proof}
Let $\wp=\mu\otimes\nu$ and $[a_{ij}]\in M^{\wp}$. By definition, for each fixed $j$,\ \
$a^j=(a_{ij})_{i\in I}$ is a measurable tuple and its class, which we denote by
$[a^j]_{\mu}$, belongs to $M^{\mu}$.
It is not hard to see that the map $a_\mu\mapsto[[a^j]_{\mu}]_{\nu}$ is
a well-defined bijection. We check that it preserves all formulas.
Let $\phi(\x)$ be a formula (assume $|x|=1$). Then by Lemma \ref{Fubini}
$$\phi^{M^{\wp}}(a_\wp)=\int\phi^{M}(a_{ij})d\wp=\int\int\phi^M(a_{ij})d\mu d\nu=
\int\phi^{M^{\mu}}([a^j]_{\mu})d\nu=\phi^{(M^\mu)^\nu}([[a^j]_{\mu}]_{\nu}).$$
\end{proof}

\section{Saturation and isomorphism theorem}
In this section we prove that for each $M$ and $\kappa$, there is charge
$\mu$ such that $M^\mu$ is $\kappa$-saturated.
Combining this with the CL variant of the isomorphism theorem,
we deduce the isomorphism theorem for the logic $\mathscr L^1$.
Recall that for every positive linear function
$\Lambda:\ell^\infty(X)\rightarrow\Rn$ with $f(1)=1$ there exists a unique
ultracharge $\mu$ such that $\Lambda(f)=\int fd\mu$ for every $f$
(see \cite{Rao}, Th. 4.7.4).
For every set $X$ let $\mathbb{U}(X)$ be the set of ultracharges on $X$.
This is a compact convex set.

\begin{lemma}
$\wp\in\mathbb{U}(X)$ is extreme if and only if it is an ultrafilter.
\end{lemma}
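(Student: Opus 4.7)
The plan is to handle the two directions separately; both are short and the obstacles are only bookkeeping.

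For the easy direction $(\Leftarrow)$, I would suppose $\wp$ is an ultrafilter, i.e.\ $\wp$ is $\{0,1\}$-valued on $P(X)$, and write $\wp=\lambda\wp_1+(1-\lambda)\wp_2$ with $\lambda\in(0,1)$ and $\wp_1,\wp_2\in\mathbb{U}(X)$. For each $A\subseteq X$, if $\wp(A)=0$ then $\lambda\wp_1(A)+(1-\lambda)\wp_2(A)=0$ forces $\wp_1(A)=\wp_2(A)=0$; if $\wp(A)=1$, apply the same observation to $A^c$. Hence $\wp_1=\wp_2=\wp$, so $\wp$ is extreme.

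For the nontrivial direction $(\Rightarrow)$, I would argue the contrapositive: assume $\wp$ is not an ultrafilter, so there is some $A\subseteq X$ with $r:=\wp(A)\in(0,1)$, and exhibit $\wp$ as a proper convex combination. The natural candidates are the two conditional ultracharges
$$\wp_1(B)=\frac{\wp(B\cap A)}{r},\qquad \wp_2(B)=\frac{\wp(B\cap A^c)}{1-r}\qquad(B\subseteq X).$$
One checks routinely that $\wp_1,\wp_2\in\mathbb{U}(X)$: each is finitely additive on all of $P(X)$ because $\wp$ is, and $\wp_1(X)=\wp(A)/r=1$, $\wp_2(X)=\wp(A^c)/(1-r)=1$. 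A direct computation gives $r\wp_1(B)+(1-r)\wp_2(B)=\wp(B\cap A)+\wp(B\cap A^c)=\wp(B)$, so $\wp=r\wp_1+(1-r)\wp_2$. Finally $\wp_1(A)=1\neq 0=\wp_2(A)$ shows $\wp_1\neq\wp_2$, so $\wp$ is not extreme.

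The only conceptual point, really, is spotting that the correct splitting uses \emph{conditional} ultracharges rather than restrictions; once that is written down, both implications reduce to finite additivity on $P(X)$. No deeper measure-theoretic machinery (such as Caratheodory extension or the representation theorem from Section~3) is needed, since $\wp_1$ and $\wp_2$ are defined on the full power set from the outset.
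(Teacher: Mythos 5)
Your proof is correct and follows essentially the same route as the paper: the forward direction uses exactly the same conditional ultracharges $\wp_1(B)=\wp(B\cap A)/r$, $\wp_2(B)=\wp(B\cap A^c)/(1-r)$ (the paper phrases it as a contradiction rather than a contrapositive), and the converse is the same two-valued argument. The extra verifications you include (finite additivity, normalization, $\wp_1\neq\wp_2$) are routine and consistent with what the paper leaves implicit.
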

\begin{proof}
Assume $\wp$ is extreme and $\wp(Y)=\varepsilon\in(0,1)$ for some $Y\subseteq X$.
For $A\subseteq X$ set
$$\wp_1(A)=\frac{\wp(A\cap Y)}{\varepsilon},\ \ \ \ \ \ \ \wp_2(A)=\frac{\wp(A\cap Y^c)}{1-\varepsilon}.$$\
Then, $\wp=\varepsilon\wp_1+(1-\varepsilon)\wp_2$ and $\wp\neq\wp_1,\wp_2$.
This is a contradiction. Conversely assume $\wp$ is 2-valued and $\wp=\varepsilon\wp_1+(1-\varepsilon)\wp_2$ where $0<\varepsilon<1$.
Let $A\subseteq X$. Then both $\wp(A)=0$ and $\wp(A)=1$ imply that $\wp_1(A)=\wp_2(A)=\wp(A)$.
\end{proof}

In is well-known in CL that if $\mathcal F$ is a countably
incomplete, then $\prod_{\mathcal F} M_i$ is $\aleph_1$-saturated.
The same proof can be used in $\mathscr L^1$ to show that such an ultraproduct is $\aleph_1$-saturated
if only the extreme types are considered. Let us call a structure $M$
\emph{extremally $\aleph_1$-saturated} if for every countable $A\subseteq M$,
every extreme type in $S_n(A)$ is realized in $M$.

\begin{proposition}\label{sat1}
Let $\mathcal F$ be a countably incomplete ultrafilter on $I$ and for each $i\in I$,
$M_i$ be a $\tau$-structure. Then, $M=\prod_{\mathcal F}M_i$ is extremally
$\aleph_1$-saturated.
\end{proposition}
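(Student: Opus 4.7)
The plan is to mimic the classical continuous-logic proof of $\aleph_1$-saturation of ultraproducts over countably incomplete ultrafilters, adapting it to $\mathscr L^1$ by exploiting extremality of $p$.

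First I would use countable incompleteness of $\mathcal F$ to fix a decreasing sequence $I=I_0\supseteq I_1\supseteq\cdots$ in $\mathcal F$ with $\bigcap_n I_n=\emptyset$. Let $A=\{a^k=[a^k_i]:k\in\Nn\}\subseteq M$ and let $p\in S_n(A)$ be extreme; enumerate a $\|\cdot\|_\infty$-dense countable set $\{\phi_k\}$ of linear formulas in $\mathbb D_n(A)$, with $r_k=p(\phi_k)$. The construction reduces to the following key claim: for every $n'\in\Nn$ and $\epsilon>0$,
$$
G_{n',\epsilon}:=\{i\in I:\exists \bar b\in M_i^n,\ |\phi_k^{M_i}(\bar b,\bar a^k_i)-r_k|<\epsilon\text{ for all }k\leqslant n'\}\in\mathcal F.
$$
Granted the claim, for $i\in I_{n'}\setminus I_{n'+1}$ pick $\bar b_i\in M_i^n$ witnessing $G_{n',1/n'}\cap I_{n'}\in\mathcal F$ (and $\bar b_i$ arbitrary for $i\notin I_1$). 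Then by Theorem \ref{th1}, using that on $I_m$ the integrand $\phi_k^{M_i}(\bar b_i,\bar a^k_i)$ is within $1/m$ of $r_k$ for every $m\geqslant k$, one has $\phi_k^M([\bar b_i],\bar a^k)=\int\phi_k^{M_i}(\bar b_i,\bar a^k_i)d\mu=r_k$ for every $k$, so $[\bar b_i]$ realizes $p$ on the dense set and hence on all of $\mathbb D_n(A)$.

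For the key claim, extremality is essential. My approach would be to show that every extreme $\mathscr L^1$-type $p$ over $A$ is the restriction to $\mathbb D_n(A)$ of some CL-type $\tilde p$ over $A$. Granted such $\tilde p$, the CL-formula $\max_{k\leqslant n'}|\phi_k(\bar x,\bar a^k)-r_k|$ has $\tilde p$-value $0$, so by the classical CL-$\aleph_1$-saturation of ultraproducts over countably incomplete ultrafilters (see \cite{BBHU}), $\tilde p$ is realized in $M$, and this realizer in particular realizes $p$; the key claim then follows from CL-{\L}o\'{s}. To build the CL-extension, by CL-compactness it suffices to check that for every finite family $\phi_1,\ldots,\phi_m\in\mathbb D_n(A)$ and $\epsilon_1,\ldots,\epsilon_m>0$, the CL-diagram of $M$ together with $\{|\phi_j(\bar x,\bar a^j)-r_j|\leqslant\epsilon_j\}_{j\leqslant m}$ is CL-satisfiable. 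Failure of finite satisfiability would, via a convex-separation argument on the image of some CL-elementary extension $N$ of $M$ under the evaluation map $\bar b\mapsto(\phi_j^N(\bar b,\bar a^j))_{j\leqslant m}\in\Rn^m$, produce a proper convex decomposition $p=\lambda p_1+(1-\lambda)p_2$ in $S_n(A)$ with $p_1\neq p_2$, contradicting extremality.

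The hard part is showing that extreme linear types extend to CL-types: this is the conceptual bridge that shifts the saturation problem from $\mathscr L^1$ (where $\max/\min$ connectives are unavailable) to CL (where the standard argument applies). One must verify carefully that any obstruction to a CL-extension produces a genuine splitting of $p$ in $S_n(A)$ into two distinct linear functionals --- essentially a Choquet-simplex property of $S_n(A)$ whose extreme boundary is identified with the CL-type space.
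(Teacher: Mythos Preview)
Your overall strategy is sound and takes a somewhat different route from the paper. Both arguments hinge on the same bridge: an \emph{extreme} linear type over a countable set is finitely approximately realized in $M$ in the CL sense, i.e.\ extends to a CL-type over the same parameters. You phrase this as ``extreme $\mathscr L^1$-types extend to CL-types'' and then invoke CL-$\aleph_1$-saturation of $\prod_{\mathcal F}M_i$ as a black box; the paper instead extracts the ingredient directly and re-runs the diagonalisation by hand. Once the bridge is in place, your conclusion is immediate and arguably tidier than the paper's explicit construction.

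Where the two proofs genuinely differ is in how the bridge is built. The paper represents $p$ by ultracharges on $M$: by Kantorovich extension the set
\[
V=\Big\{\wp\in\mathbb U(M):\ p(\phi)=\int\phi^M\,d\wp\ \text{for all linear }\phi\Big\}
\]
is nonempty, and a one-line computation shows that extremality of $p$ makes $V$ a closed \emph{face} of $\mathbb U(M)$. Hence any extreme point of $V$ is extreme in $\mathbb U(M)$, i.e.\ an ultrafilter $\mathcal D$ on $M$, and then $p(\phi)=\lim_{\mathcal D,x}\phi^M(x)$ gives simultaneous approximate satisfaction of any finite set of conditions at once. This is exactly the CL-extension you want, obtained in two clean strokes.

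Your proposed route to the same bridge---a convex-separation argument on the image set $K=\{(\phi_j^M(\bar b))_j\}\subseteq\Rn^m$---is morally right but the details you gesture at are delicate. Knowing $(r_j)\in\overline{\mathrm{conv}\,K}\setminus\overline K$ only tells you $(r_j)$ is not extreme in $\overline{\mathrm{conv}\,K}$; lifting a decomposition $(r_j)=\lambda u+(1-\lambda)v$ in $\Rn^m$ back to a decomposition $p=\lambda p_1+(1-\lambda)p_2$ in $S_n(A)$ with $p_1\neq p_2$ is not automatic: an arbitrary splitting of a representing charge along a set $Y$ can yield $p_1=p_2$ even when the charges differ. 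One has to choose $Y$ as a half-space with intermediate measure and check that such a half-space exists whenever $(r_j)\notin\overline K$; this works, but it is precisely the face argument in disguise and is more laborious than the paper's version. I would recommend replacing your convex-separation sketch by the face argument on $\mathbb U(M)$: it proves exactly the lemma you isolate (extreme linear types extend to CL-types) with no finite-dimensional bookkeeping.
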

\begin{proof}
If $a^1,a^2,...\in M$, then $(M,a^1,a^2,...)\simeq\prod_{\mathcal F}(M_i,a^1_i,a^2_i,...)$.
So, we may forget the parameters and show directly that every extreme type
$p(\x)$ of $Th(M)$ is realized in $M$. For simplicity assume $|\x|=1$.
Let $$V=\big\{\wp\in\mathbb{U}(M)|\ \ \ p(\phi)=\int\phi^M(x)d\wp\ \ \ \ \forall\phi\big\}.$$
We may consider $p$ as a function defined on the space of functions $\phi^M(x)$.
Then, by Kantorovich extension theorem (\cite{Aliprantis-Inf}, Th. 8.32)
$p$ extends to a positive linear functional $\bar p$ on $\ell^{\infty}(M)$.
Then, $\bar p$ is represented by integration over an ultracharge on $M$
so that $V$ is non-empty.
Moreover, $V$ is a closed face of $\mathbb{U}(M)$.
In particular, suppose $\varepsilon\mu+(1-\varepsilon)\nu=\wp\in V$ where $\varepsilon\in(0,1)$.
Define types $p_\mu$, $p_\nu$ by setting for each $\phi(x)$,\
$p_\mu(\phi)=\int\phi^M d\mu$ and $p_\nu(\phi)=\int\phi^M d\nu$.
Then, $\varepsilon p_\mu+(1-\varepsilon)p_\nu=p$.
We have therefore that $p_\mu=p_\nu=p$ and hence $\mu,\nu\in V$.

Let $\wp$ be an extreme point of $V$.
Then, $\wp$ is an extreme point of $\mathbb{U}(M)$ and hence corresponds to
an ultrafilter, say $\mathcal D$ (not to be confused with the ultrafilter $\mathcal F$ on $I$).
We have therefore that $$\ \ \ \ p(\phi)=\int_M\phi^M(x)d\wp=
\lim_{\mathcal D,x}\phi^M(x) \ \ \ \ \ \ \ \forall\phi.\hspace{12mm} (*)$$
Since the language is countable, $p$ is axiomatized by a countable set of conditions
say $$p(x)\equiv\{0\leqslant\phi_1(x),\ 0\leqslant\phi_2(x),\ \ldots\ \}.$$
Let $$X_n=\big\{i\in I|\ \ \ -\frac{1}{n}<\sup_x\bigwedge_{k=1}^n\phi_k^{M_i}(x)\ \big\}.$$
Since $\mathcal D$ is an ultrafilter, there exists $a\in M$ such that
$-\frac{1}{n}<\phi^M_k(a)$ for $k=1,...,n$.
We have therefore that $X_n\in\mathcal F$.
Let $I_1\supseteq I_2\supseteq\cdots$ be a chain such that
$I_n\in\mathcal F$ and $\bigcap_n I_n=\emptyset$.
Then, $Y_n=I_n\cap X_n\in\mathcal F$.
Also, $Y_n$ is a decreasing and $\bigcap_n Y_n=\emptyset$.
For $i\not\in Y_1$ let $a_i\in M_i$ be arbitrary.
For $i\in Y_1$, take the greatest $n_i$ such that $i\in Y_{n_i}$
and let $a_i\in M_i$ be such that $-\frac{1}{n_i}\leqslant\phi_k^{M_i}(a_i)$
for $k=1,...,n_i$. Let $a=[a_i]$. Then, for every $n$, if $i\in Y_n$,
we have that $n\leqslant n_i$ and hence $-\frac{1}{n}\leqslant\phi_k^{M_i}(a_i)$ for $k=1,...,n$.
We conclude that $0\leqslant\phi_k^M(a)$ for every $k\geqslant1$, i.e. $a$ realizes $p$.
\end{proof}

In particular, every compact model is extremally $\aleph_1$-saturated.
The proposition \ref{sat1} is interesting in its own right.
However, it does not help us to prove the isomorphism theorem.
As stated in its proof, for every $p$ in $S_1(T)$ (or similarly in $S_1(M)$),
there exists an ultracharge $\mu$ on $M$ which represents $p$.
Let $a:M\rightarrow M$ be the identity map.
Then, we have that $p(\phi)=\int\phi^M(x)d\mu=\phi^{M^\mu}(a)$.
So, $p$ is realized in $M^\mu$. To realize all such types simultaneously,
the corresponding ultracharges must be all dominated by a single ultracharge.

\begin{proposition}\label{sat3}
For each $M$, there exists an ultracharge $\mu$ such that $M^\mu$
realizes all types in every $S_n(M)$. 
\end{proposition}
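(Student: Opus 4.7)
The plan is to realize each type in a suitable powermean of $M$ and then amalgamate these powermeans into a single one via Lemma \ref{charge product}.

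First, for each $n\geqslant 1$ and each $p\in S_n(M)$, I produce an ultracharge $\mu_p$ on $M^n$ such that
$$p(\phi)=\int\phi^M(\x)\, d\mu_p \hspace{10mm} \text{for every formula } \phi(\x) \text{ over } M \text{ with } |\x|=n.$$
Regard $p$ as a positive linear functional of norm one on the subspace $\mathbb D_n\subseteq\ell^\infty(M^n)$ of functions of the form $\phi^M$. By the Kantorovich extension theorem it extends to a positive linear functional on $\ell^\infty(M^n)$ of norm $1$ taking the value $1$ at the constant function $1$; the ultracharge representation of such functionals (\cite{Rao}, Th.~4.7.4) then yields the desired $\mu_p$ on the full power set $P(M^n)$. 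Applying Theorem \ref{powermean} with the coordinate projections $\pi_1,\ldots,\pi_n:M^n\to M$, which are trivially $P(M^n)$-measurable, one computes
$$\phi^{M^{\mu_p}}([\pi_1],\ldots,[\pi_n])=\int\phi^M(\pi_1(\x),\ldots,\pi_n(\x))\, d\mu_p=\int\phi^M(\x)\, d\mu_p=p(\phi),$$
so the tuple $([\pi_1],\ldots,[\pi_n])$ realizes $p$ in $M^{\mu_p}$.

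Second, amalgamate. Let $\mathcal I=\prod_{(n,p)}M^n$, the product indexed over all pairs with $n\geqslant 1$ and $p\in S_n(M)$. By Lemma \ref{charge product}, there is an ultracharge $\mu$ on $\mathcal I$ with $\mu_p\leqslant\mu$ via the coordinate projection $f_{n,p}:\mathcal I\to M^n$ for every $(n,p)$. By the change of variables remark following Corollary \ref{diagonal}, the map $[a_\bullet]_{\mu_p}\mapsto[a_{f_{n,p}(\bullet)}]_\mu$ is an elementary embedding $M^{\mu_p}\preccurlyeq M^\mu$. Since $p$ is realized in $M^{\mu_p}$, it is realized in $M^\mu$.

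The main obstacle lies in the first step: it is essential that the representing measure $\mu_p$ live on $M^n$ itself (on the full power set), not merely on some abstract spectrum of formulas. That is precisely what makes the coordinate projections $\pi_k$ legitimate witnessing tuples and converts the abstract representability of $p$ by integration into a concrete realization inside $M^{\mu_p}$. Once this is in hand, the amalgamation in the second step is a routine appeal to Lemma \ref{charge product} together with the elementarity of $M^\nu\preccurlyeq M^\mu$ whenever $\nu\leqslant\mu$.
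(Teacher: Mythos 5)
Your argument is correct and is essentially the paper's own proof: represent each type by an ultracharge (Kantorovich extension plus the representation of positive functionals on $\ell^\infty$), dominate all the representing charges by a single ultracharge via Lemma \ref{charge product}, and transfer realization along the change-of-variables embedding $M^{\nu}\preccurlyeq M^{\mu}$ whenever $\nu\leqslant\mu$. The only cosmetic differences are that the paper reduces to $S_1(M)$ and realizes each type directly in the dominating powermean via a map $f$ with $f(\wp)=\mu_p$, while you treat $n$-types on $M^n$ and pass through the intermediate $M^{\mu_p}$; note only that since $\mu_p$ is an ultracharge you should cite Theorem \ref{th1} (which requires no completeness of $M$) rather than Theorem \ref{powermean}.
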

\begin{proof}
Let $M$ be a $\tau$-structure. We only need to realize the types in $S_1(M)$ in some $M^\mu$.
The types in $S_n(M)$ are then automatically realized in it.
By Lemma \ref{charge product}, there exists a set $I$ and an ultracharge $\wp$ on $I$
such that for every ultracharge $\mu$ on $M$ one has that $\mu\leqslant\wp$.
Given $p(x)\in S_1(M)$, there exists an ultracharge $\mu$
on $M$ such that $p(\phi)=\int\phi^M(x)d\mu$ for every $\phi(x)$.
Let $f:I\rightarrow M$ be such that $f(\wp)=\mu$.
Let $a=[a_i]_\wp$ where $a_i=f(i)$.
Then, $$p(\phi)=\int_M\phi^M(x)d\mu=\int_I\phi^M(a_i)d\wp=\phi^{M^\wp}(a).$$
\end{proof}


\begin{theorem} \label{saturated}
Let $M$ be complete and density$(M)\leqslant\kappa$. Then there exists a charge space
$(I,\mathscr A,\wp)$ such that $M$ is $\mathscr A $-meanable and $M^{\wp}$ is $\kappa^+$-saturated.
\end{theorem}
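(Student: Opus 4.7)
The plan is to construct, by transfinite recursion of length $\kappa^+$, an elementary chain of powermeans $(M_\alpha)_{\alpha\leqslant\kappa^+}$, each realized as $M^{\wp_\alpha}$ for a coherent tower of charges $(\wp_\alpha)$ on $\kappa^+$-complete Boolean algebras, in such a way that every $1$-type over $M_\alpha$ is realized in $M_{\alpha+1}$. Setting $\wp=\wp_{\kappa^+}$, I will then check $\kappa^+$-saturation of $M^\wp$ by a cofinality/factorization argument.

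At stage $0$, take $\wp_0$ to be the trivial charge on a singleton, so $M_0=M$. At a successor stage $\alpha+1$: apply Proposition~\ref{sat3} to the complete meanable structure $M_\alpha$ to obtain an ultracharge $\nu_\alpha$ on some $J_\alpha$ such that $(M_\alpha)^{\nu_\alpha}$ realizes every type in each $S_n(M_\alpha)$, and set $\wp_{\alpha+1}=\wp_\alpha\otimes\nu_\alpha$. By Proposition~\ref{compose}, $M^{\wp_{\alpha+1}}\simeq (M^{\wp_\alpha})^{\nu_\alpha}=(M_\alpha)^{\nu_\alpha}$; the projection $\pi:I_\alpha\times J_\alpha\to I_\alpha$ pushes $\wp_{\alpha+1}$ to $\wp_\alpha$, giving $M_\alpha\preccurlyeq M_{\alpha+1}$ and realization of every type over $M_\alpha$. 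At a limit stage $\lambda$, apply Lemma~\ref{inverse limit} to the linearly ordered system $(\wp_\alpha)_{\alpha<\lambda}$ to obtain $\wp_\lambda$ on the inverse limit $\mathbf I_\lambda$ with $\wp_\alpha\leqslant\wp_\lambda$ for every $\alpha<\lambda$, hence $M_\alpha\preccurlyeq M_\lambda$ for each such $\alpha$. Some care is required to close the Boolean algebra at limits under $\kappa^+$-many operations and to extend the charge accordingly, so that $M$ remains $\mathscr A_\lambda$-meanable throughout the construction.

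To verify $\kappa^+$-saturation of $M^\wp$, let $A\subseteq M^\wp$ with $|A|\leqslant\kappa$ and let $p\in S_1(A)$. The crucial step is the claim that $A\subseteq M_\alpha$ for some $\alpha<\kappa^+$. Each $[a]_\wp\in M^\wp$ is represented by a measurable map $a:\mathbf I_{\kappa^+}\to M$; since density$(M)\leqslant\kappa$, the Borel $\sigma$-algebra of $M$ is $\kappa$-generated, so $a$ is determined, modulo null sets, by the preimages of $\kappa$-many basic open sets, each of which lies in $\pi_{\alpha(a,i)}^{-1}(\mathscr A_{\alpha(a,i)})$ for some $\alpha(a,i)<\kappa^+$. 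Since $\mathrm{cf}(\kappa^+)>\kappa$, a single $\alpha_a<\kappa^+$ works for this $a$, and running over the $\leqslant\kappa$ elements of $A$ yields a common $\alpha<\kappa^+$ with $A\subseteq M_\alpha$. Because $M_\alpha\preccurlyeq M^\wp$, the restriction of $p$ is a type over $A$ in $M_\alpha$ and is realized in $M_{\alpha+1}\preccurlyeq M^\wp$ by construction.

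The principal obstacle is the interplay between the limit stages and the factorization step. Lemma~\ref{inverse limit} produces a charge only on the Boolean algebra generated by the pullbacks $\pi_\alpha^{-1}(\mathscr A_\alpha)$, which need not be $\kappa^+$-complete, whereas meanability of $M$ requires a $\kappa^+$-complete algebra at every stage; extending the charge to such an algebra while preserving the compatibility $\wp_\alpha\leqslant\wp_\lambda$ demands careful bookkeeping. At the same time one must show that every $\mathscr A_\lambda$-measurable map $a:\mathbf I_\lambda\to M$ is essentially factorable through some $\pi_\alpha$, which is exactly what drives the cofinality argument for saturation and is the technical heart of the proof.
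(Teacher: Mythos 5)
Your proposal is correct in outline and takes essentially the same route as the paper: a transfinite elementary chain of powermeans of $M$ built with Proposition \ref{sat3} and Proposition \ref{compose} at successor stages and Lemma \ref{inverse limit} at limits, followed by the cofinality/factorization argument showing that every element of $M^{\wp}$ already lies in some stage, so that $M^{\wp}$ is the union of the chain and hence $\kappa^{+}$-saturated. The one step you defer as the ``technical heart'' --- that a measurable $a$ whose preimages of a $\kappa$-sized base of $M$ are supported below some $\alpha$ genuinely factors through stage $\alpha$ --- is exactly what the paper's proof carries out, by fixing such a base $\{U_k\}$, using regularity of $\kappa^{+}$, and showing that two indices agreeing on coordinates $\leqslant\alpha$ cannot be separated by any set in the algebra generated by those coordinates (so $a_{\mathbf r}=a_{\mathbf s}$); the paper also eases your limit-stage bookkeeping by completing the inverse-limit charges to ultracharges at all intermediate limits and passing to a non-ultra $\kappa^{+}$-complete charge only at the very last step.
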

\begin{proof}
First assume $\kappa=\aleph_0$. By a repeated use of Proposition \ref{sat3},
we obtain a countable chain
$$M\preccurlyeq M^{\mu_1}\preccurlyeq (M^{\mu_1})^{\mu_2}\preccurlyeq\cdots$$
where $\mu_n$ is an ultracharge on a set $I_n$.
In the light of Proposition \ref{compose}, we may rewrite it as
$$M\preccurlyeq M^{\nu_1}\preccurlyeq M^{\nu_2}\preccurlyeq\cdots$$
where $\nu_n=\mu_1\otimes\cdots\otimes\mu_n$ is an ultracharge on $J_n=I_1\times\cdots\times I_n$
and $M^{\nu_{n+1}}$ realizes types in $S_1(M^{\nu_n})$.
It is clear that $\{(J_n,\nu_n), f_{mn}\}$, where $f_{mn}:J_n\rightarrow J_m$ is the
projection map, is an inverse system of ultracharges.
The inverse limit of this system is a charge space which can be completed
to an ultracharge space, say $(J_\lambda,\nu_\lambda)$. Then, $\nu_n\leqslant\nu_\lambda$ and
$$M\preccurlyeq M^{\nu_1}\preccurlyeq M^{\nu_2}\preccurlyeq\cdots\preccurlyeq M^{\nu_\lambda}.$$
Iterating the argument, we obtain an inverse system
$\{(J_\alpha, \nu_\alpha), f_{\alpha\beta}\}_{\alpha<\beta<\lambda_1}$
of ultracharges and a chain
$$M\preccurlyeq M^{\nu_1}\preccurlyeq\cdots\preccurlyeq
M^{\nu_{\alpha}}\preccurlyeq\cdots\ \hspace{14mm} \alpha\in\lambda_1$$
such that every $M^{\nu_{\alpha+1}}$ realizes types in $S_1(M^{\nu_\alpha})$.

Let $(\mathbf J,\mathscr B,\nu)$ be the inverse limit of
$\{(J_\alpha,\nu_\alpha), f_{\alpha\beta}\}_{\alpha<\beta<\lambda_1}$
given by Lemma \ref{inverse limit}.
Let $\mathscr A=\sigma(\mathscr B )$ and $\wp$ be an extension of $\nu$ to $\mathscr A $.
So, $(\mathbf J,\mathscr A,\wp)$ is a charge space where $\mathscr A$ is $\aleph_1$-complete.
It is clear that $N=\bigcup_{\alpha<\lambda_1} M^{\nu_\alpha}$ is $\aleph_1$-saturated and that
$N\preccurlyeq M^{\wp}$. We will show that $N=M^\wp$.

Recall that the embedding $M^{\nu_\alpha}\preccurlyeq M^{\nu_\beta}$
takes place via the map $[a_i]\mapsto[b_j]$ where $b_j=a_{f_{\beta\alpha}(j)}$ for every $j\in J_\beta$.
In this way, we identify $[b_j]$ with $[a_i]$.
An element of $M^\wp$ is of the form $[a_{\bf r}]$ where ${\bf r}=(r_\gamma)\in\mathbf{J}$
and $f_{\beta\gamma}(r_\gamma)=r_{\beta}$.
We show that for every such $[a_{\bf r}]$ there exists $\alpha<\lambda_1$
such that $a_{\bf r}$ does not depend on $r_\gamma$ when $\alpha\leqslant\gamma$.
In other words, for each ${\bf r},{\bf s}\in\mathbf{J}$, if $r_\gamma=s_\gamma$ for all $\gamma\leqslant\alpha$,
then $a_{\bf r}=a_{\bf s}$. This clearly implies that $[a_{\bf r}]\in M^{\nu_\alpha}$.

Fix a countable base $\{U_k\}_{k\in\lambda}$ for $M$.
For every $\alpha$, let $\mathscr C_\alpha$ be the algebra of subsets of $\mathbf{J}$ consisting
of sets of the form $\pi_\beta^{-1}(X)$ where $\beta\leqslant\alpha$ and $X\subseteq J_\beta$.
So, by regularity of $\lambda_1$,\ \ $\mathscr A=\bigcup_{\alpha<\lambda_1}\sigma(\mathscr C_\alpha)$.
In particular, there exists $\alpha<\lambda_1$ such that every $a^{-1}(U_k)$
belongs to $\sigma(\mathscr C_\alpha)$.
Suppose that $a_{\bf r}\neq a_{\bf s}$.
Then, ${\bf r}\in a^{-1}(U)$ and ${\bf s}\in a^{-1}(V)$ for some disjoint basic open sets $U$, $V$.
This implies that there exists $\beta\leqslant\alpha$ and $X\subseteq J_\beta$
such that ${\bf r}\in\pi_\beta^{-1}(X)$ and ${\bf s}\in\pi_\beta^{-1}(X^c)$.
Since, otherwise, one could prove by induction (on the complexity class of $A$)
that for every $A\in\sigma(\mathscr C_\alpha)$, \ \
${\bf r}\in A$ if and only if ${\bf s}\in A$ which is impossible.
We conclude that if $r_\gamma=s_\gamma$ for all $\gamma\leqslant\alpha$, then $a_{\bf r}=a_{\bf s}$.

For arbitrary $\kappa$, one must use an elementary chain of length $\kappa^+$.
Also, the charge $\wp$ defined above must be an extension of $\nu$ to
a $\kappa^+$-complete subalgebra generated by $\mathscr B $ in order
to make $M$ an $\mathscr A $-meanable model.
\end{proof}

The isomorphism theorem in first order logic (as well as continuous logic)
states that for $\tau$-structures $M$ and $N$, if $M\equiv_{CL} N$, then there
exists an ultrafilter $\mathcal F$ such that $M^{\mathcal F}\simeq N^{\mathcal F}$.
To prove the similar result in $\mathscr L^1$ we need the following.

\begin{proposition} \label{10}
Assume $M\equiv N$ and they are linearly $\aleph_0$-saturated. Then $M\equiv_{CL} N$.
\end{proposition}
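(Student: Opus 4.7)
The plan is to set up a back-and-forth between $M$ and $N$ indexed by linear-type equality, then push it through an induction on CL-formula complexity to conclude $\equiv_{CL}$.

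\emph{Back-and-forth step.} Suppose $\a\in M$ and $\b\in N$ are finite tuples with $\phi^{M}(\a)=\phi^{N}(\b)$ for every linear formula $\phi(\x)$, and let $c\in M$. Write $p(y)$ for the linear type of $c$ over $\a$ in $M$, and define a candidate functional on linear formulas with parameters from $\b$ by
$$q(\phi(\b,y)):=p(\phi(\a,y)).$$
This is well defined: if $\phi(\b,y)=\psi(\b,y)$ pointwise on $N$, then $\sup_y[\phi-\psi]$ and $\inf_y[\phi-\psi]$, viewed as linear formulas in $\x$, both vanish at $\b$ in $N$, hence at $\a$ in $M$ by hypothesis, forcing $\phi(\a,y)=\psi(\a,y)$ pointwise in $M$ and so $p(\phi(\a,y))=p(\psi(\a,y))$. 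Positivity, linearity and $q(1)=1$ are immediate. To verify $q$ is a consistent linear type over $\b$, by linear compactness it suffices to show, for every positive combination $\psi=\sum_i c_i\phi_i$, that $q(\psi)\in[\inf_y\psi^{N}(\b,y),\sup_y\psi^{N}(\b,y)]$; but $q(\psi)=p(\psi(\a,y))\in[\inf_y\psi^{M}(\a,y),\sup_y\psi^{M}(\a,y)]$ because $p$ is a type, and the endpoints transfer from $M$ to $N$ via the hypothesis applied to the linear formulas $\sup_y\psi$ and $\inf_y\psi$. Linear $\aleph_0$-saturation of $N$ then realises $q$ by some $d\in N$, yielding $\phi^{N}(\b,d)=\phi^{M}(\a,c)$ for every linear $\phi$. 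Symmetric use of saturation of $M$ gives the back direction.

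\emph{Induction on CL-formulas.} I then prove by induction on the CL-formula $\sigma(\x)$ that $\sigma^{M}(\a)=\sigma^{N}(\b)$ whenever $\a,\b$ satisfy $\phi^{M}(\a)=\phi^{N}(\b)$ for every linear $\phi$. The atomic and linear cases are immediate. The lattice cases $\sigma=\phi_1\wedge\phi_2$ and $\sigma=\phi_1\vee\phi_2$ are free because $\wedge,\vee$ act pointwise on values. For $\sigma=\sup_y\phi(\x,y)$, given $c\in M$ the back-and-forth produces $d\in N$ with $\a c$ and $\b d$ again linearly equivalent, so $\phi^{M}(\a,c)=\phi^{N}(\b,d)\leqslant\sup_y\phi^{N}(\b,y)$ by the inductive hypothesis; taking the supremum over $c$ gives $\sigma^{M}(\a)\leqslant\sigma^{N}(\b)$, and the reverse follows from the back direction. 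The case $\inf_y$ is symmetric. Specialised to empty tuples (whose linear-type equality is precisely $M\equiv N$), this yields $\sigma^{M}=\sigma^{N}$ for every CL-sentence, i.e.\ $M\equiv_{CL}N$.

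\emph{Main obstacle.} The crux is the back-and-forth step, specifically showing that the transported functional $q$ is a consistent linear type over $\b$ in $N$. This is where the linear equivalence between $\a$ and $\b$ is used twice: once to justify well-definedness (by applying $\sup_y$ and $\inf_y$ to differences of linear formulas), and once to transfer the sup/inf bounds needed for linear compactness. Once this is established, $\aleph_0$-saturation finishes the back-and-forth as in the classical saturated argument, and the induction over CL-connectives is routine precisely because $\wedge$ and $\vee$ are pointwise operations that automatically inherit pointwise equalities from their arguments.
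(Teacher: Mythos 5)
Your proof is correct and takes essentially the same route as the paper: a back-and-forth family of finite tuples with equal linear types, extended using linear compactness and linear $\aleph_0$-saturation, followed by induction on CL-formula complexity in which $\wedge,\vee$ are handled pointwise and the quantifier cases use the back-and-forth. Your explicit check that the transported functional $q$ is well defined and consistent is just a more detailed rendering of the paper's appeal to linear compactness when it realizes the type $p(x)=\{\phi(\b,x)=r \mid \phi^M(\a,c)=r\}$ in $N$.
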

\begin{proof}
Let $\mathscr{F}$ be the family of maps $f:\{a_1,...,a_n\}\rightarrow\{b_1,...,b_n\}$
where $\a\in M$, $\b\in N$ and $tp^M(\a)=tp^N(\b)$.
We show that $\mathscr{F}:M\rightarrow N$ is a partial isomorphism.
Obviously, $\emptyset\in\mathscr{F}$.
Let $f(\a)=\b$ where $f\in\mathscr{F}$ and $c\in M$. Let
$$p(x)=\{\phi(\b,x)=r|\ \ \phi^M(\a,c)=r\}.$$
If $\phi^M(\a,c)=r$, one has that $N\vDash\inf_x\phi(\b,x)\leqslant r$.
So, there is $e_1\in N$ such that $\phi^N(\b,e_1)\leqslant r$.
Similarly, there is $e_2$ such that $r\leqslant\phi^N(\b,e_2)$.
So, by linear compactness, $\phi(\b,x)$ and hence $p(x)$ is satisfiable.
Let $e\in N$ realize $p(x)$. Then, $f\cup\{(c,e)\}\in\mathscr{F}$.
This is the forth property and the back property is verified similarly.

Now, we show by induction on the complexity of the CL formula $\phi(\x)$ in $\tau$
that whenever $f\in\mathscr{F}$ and $f(\a)=\b$, one has that $\phi^M(\a)=\phi^N(\b)$.
The atomic and connective cases $+,r., \wedge,\vee$ are obvious.
Assume the claim is proved for $\phi(\x,y)$. Let $\sup_y\phi^M(\a,y)=r$ and $f(\a)=\b$.
Given $\epsilon>0$, take $c\in M$ such that $r-\epsilon<\phi^M(\a,c)$.
Take $e\in N$ such that $f\cup\{(c,e)\}\in\mathscr{F}$.
By induction hypothesis, $\phi^M(\a,c)=\phi^N(\b,e)$.
Since $\epsilon$ is arbitrary, one has that $r\leqslant\sup_y\phi^N(\b,y)$.
Similarly, one has that $\sup_y\phi^N(\b,y)\leqslant\sup_y\phi^M(\a,y)$
and hence they are equal. We conclude that $M\equiv_{CL} N$.
\end{proof}

For every linearly complete theory $T$ let $T^{CL}$ be the common theory
(in the CL sense) of linearly $\aleph_0$-saturated models of $T$.
Proposition \ref{10} states that $T^{CL}$ is complete in the CL sense.

\begin{theorem} \label{isomorphism} (Isomorphism theorem)
If $M,N$ are complete and $M\equiv N$, there are charge spaces
$(I,\mathscr A,\mu)$ $(J,\mathscr B, \nu)$ such $M$ is $\mathscr A$-meanable,
$N$ is $\mathscr B$-meanable and that $M^{\mu}\simeq N^{\nu}$.
\end{theorem}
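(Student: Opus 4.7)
The plan is to reduce the statement to the continuous-logic Keisler--Shelah theorem cited just above, by first passing to powermeans of $M$ and $N$ that are saturated enough to promote $\equiv$ to $\equiv_{CL}$. Put $\kappa=\max(\aleph_0,\mathrm{density}(M),\mathrm{density}(N))$. Applying Theorem \ref{saturated} separately to $M$ and to $N$, I obtain charge spaces $(I,\mathscr A_0,\mu_0)$ and $(J,\mathscr B_0,\nu_0)$ such that $M$ is $\mathscr A_0$-meanable, $N$ is $\mathscr B_0$-meanable, and both $M^{\mu_0}$ and $N^{\nu_0}$ are linearly $\kappa^+$-saturated. By Corollary \ref{diagonal} the diagonal maps $M\preccurlyeq M^{\mu_0}$ and $N\preccurlyeq N^{\nu_0}$ are linearly elementary, so the hypothesis $M\equiv N$ transfers upward to $M^{\mu_0}\equiv N^{\nu_0}$.

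Both $M^{\mu_0}$ and $N^{\nu_0}$ are in particular linearly $\aleph_0$-saturated, so Proposition \ref{10} upgrades this to full CL-equivalence, $M^{\mu_0}\equiv_{CL} N^{\nu_0}$. The classical continuous-logic Keisler--Shelah theorem (quoted just before the statement of the theorem) then yields an ultrafilter $\mathcal F$ on some set $K$ with a $\tau$-isomorphism $(M^{\mu_0})^{\mathcal F}\simeq (N^{\nu_0})^{\mathcal F}$. Identifying $\mathcal F$ with its associated $\{0,1\}$-valued ultracharge on $P(K)$, I form the tensor-product charges $\mu:=\mu_0\otimes\mathcal F$ on $I\times K$ and $\nu:=\nu_0\otimes\mathcal F$ on $J\times K$ in the sense introduced before Lemma \ref{Fubini}. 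By Proposition \ref{compose},
$$M^{\mu}\;\simeq\;(M^{\mu_0})^{\mathcal F}\;\simeq\;(N^{\nu_0})^{\mathcal F}\;\simeq\;N^{\nu},$$
which is the desired isomorphism.

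The one remaining loose end is meanability of $M$ and $N$ for the product algebras. The algebra underlying $\mu$ is $\mathscr A=\{A\subseteq I\times K:A_r\in\mathscr A_0\text{ for every }r\in K\}$, and the $\kappa^+$-completeness of $\mathscr A_0$ guaranteed by Theorem \ref{saturated} passes fibrewise to $\mathscr A$, so $M$ is still $\mathscr A$-meanable; the symmetric argument handles $N$. The main obstacle I anticipate is really the bookkeeping at the very start: the saturation level for $M^{\mu_0}$ and $N^{\nu_0}$ must be high enough to feed Proposition \ref{10} on both sides simultaneously, and the tensoring with $\mathcal F$ must preserve meanability so that Proposition \ref{compose} applies. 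Both requirements are controlled by the $\kappa^+$-completeness already built into Theorem \ref{saturated} and the product construction of Section 3, so the proof is essentially an assembly of these ingredients together with the classical CL isomorphism theorem.
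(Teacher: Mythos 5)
Your proof is correct and takes essentially the same route as the paper: obtain saturated powermeans via Theorem \ref{saturated}, transfer $M\equiv N$ upward through the diagonal embeddings of Corollary \ref{diagonal}, upgrade to $\equiv_{CL}$ by Proposition \ref{10}, invoke the CL isomorphism theorem to get $(M^{\mu_0})^{\mathcal F}\simeq(N^{\nu_0})^{\mathcal F}$, and conclude with Proposition \ref{compose} that $M^{\mu_0\otimes\mathcal F}\simeq N^{\nu_0\otimes\mathcal F}$. Your additional remarks on $\kappa^+$-completeness of the product algebra are a harmless elaboration of details the paper already records before Lemma \ref{Fubini}.
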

\begin{proof}
By Theorem \ref{saturated}, there are charges $\wp_1,\wp_2$ such that
$M^{\wp_1}$, $N^{\wp_2}$ are $\aleph_1$-saturated (hence complete).
By Proposition \ref{diagonal}, $M^{\wp_1}\equiv N^{\wp_2}$.
By Proposition \ref{10}, $M^{\wp_1}\equiv_{CL} N^{\wp_2}$.
So, by the CL variant of the isomorphism theorem,
there exists an ultrafilter $\mathcal F$
such that $(M^{\wp_1})^{\mathcal F}\simeq(N^{\wp_2})^{\mathcal F}$.
We conclude by Proposition \ref{compose} that
$M^{\wp_1\otimes\mathcal F}\simeq N^{\wp_2\otimes\mathcal F}$.
\end{proof}

In the proof of proposition \ref{sat3},
if $M\equiv N$, one can find (using Lemma \ref{charge product})
a $\mu$ such that $M^\mu$ and $N^\mu$ realize types over $M$
and $N$ respectively.  As a consequence, it is possible to arrange
in Theorem \ref{isomorphism} to have that $\mu=\nu$.

\section{Approximation}
In this section we show that every CL formula which is preserved
by ultramean and powermean is approximated by linear formulas.
Let $\Gamma$ be a set of CL formulas in the signature $\tau$.
A formula $\phi(\x)$ is approximated by formulas in $\Gamma$
if for each $\epsilon>0$, there is a formula $\theta(\x)$ in $\Gamma$ such that
$$M\vDash|\phi(\a)-\theta(\a)|\leqslant\epsilon\ \ \ \ \ \ \ \ \ \forall M\ \ \forall\a\in M.$$
Recall that $\mathbb D$ is the vector space of $\tau$-sentences where $\sigma,\eta$
identified if $\sigma^M=\eta^M$ for every $M$.
$\mathbb D$ is also partially ordered by $\sigma\leqslant\eta$
if $\sigma^M\leqslant\eta^M$ for every $M$ and normed by
$$\|\sigma\|=\sup_M\sigma^M.$$
So, a linearly complete theory is just a norm one positive linear functional
$T:\mathbb D\rightarrow\Rn$ and $M\vDash T$ means that $T(\sigma)=\sigma^M$
for every $\sigma$.
Let $\mathbb T$ be the set of all linearly complete $\tau$-theories.
So, $\mathbb T\subseteq B_1(\mathbb D^*)$.
Put the weak* topology of $\mathbb D^*$ on $\mathbb T$.

\begin{proposition} \label{compact-convex}
$\mathbb T$ is a compact convex Hausdorff space.
\end{proposition}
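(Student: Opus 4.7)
The plan is to realize $\mathbb T$ as the intersection of two weak* closed subsets of $\mathbb D^*$ and then read off the three properties from the Banach-Alaoglu theorem. The discussion immediately before the proof of Theorem \ref{compactness} already identifies linearly complete theories with linear functionals $T : \mathbb D \rightarrow \Rn$ satisfying $\|T\| = 1 = T(1)$ (positivity is automatic: if $0 \leqslant \sigma$ with $\|\sigma\| \leqslant 1$, then $\|1 - \sigma\| \leqslant 1$ forces $T(\sigma) \geqslant 0$, and the existence of a model for $\sigma = T(\sigma)$ is supplied by the ultramean argument there). So I may identify
\[
\mathbb T = \{T \in \mathbb D^* : \|T\| \leqslant 1 \text{ and } T(1) = 1\}.
\]

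With this identification in hand, the three claims are formal. That $\mathbb T$ is Hausdorff follows because the ambient weak* topology on $\mathbb D^*$ is already Hausdorff (the elements of $\mathbb D$ separate points of $\mathbb D^*$). Convexity is immediate from the linearity of $T \mapsto T(1)$ together with the triangle inequality for the operator norm: a convex combination of two elements of $\mathbb T$ again sends $1$ to $1$ and has norm at most $1$. For compactness, I would apply Banach-Alaoglu to conclude that $B_1(\mathbb D^*)$ is weak* compact, observe that $T \mapsto T(1)$ is weak* continuous (by the very definition of the weak* topology) so that $\{T : T(1) = 1\}$ is weak* closed, and deduce that $\mathbb T$, being a closed subset of the compact Hausdorff space $B_1(\mathbb D^*)$, is itself weak* compact.

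The only point requiring genuine care is the identification of $\mathbb T$ with the described subset of $B_1(\mathbb D^*)$; but this correspondence has already been set up in the material preceding the proof of Theorem \ref{compactness}, so there is no real obstacle here, just bookkeeping on top of Banach-Alaoglu.
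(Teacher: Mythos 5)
Your proof is correct and takes essentially the same approach as the paper: the paper also proves compactness by observing that $\mathbb T$ is a closed subset of the unit ball of $\mathbb D^*$ and invoking Alaoglu's theorem, with convexity checked directly from the definition. The only difference is cosmetic: you make the closedness explicit through the identification $\mathbb T=\{T\in\mathbb D^*:\ \|T\|\leqslant1,\ T(1)=1\}$ (justified, as you note, by the ultramean argument preceding Theorem \ref{compactness}) and then read off convexity from that description, whereas the paper verifies convexity directly and leaves the closedness implicit.
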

\begin{proof}
It is clear that for $T_1,T_2\in\mathbb T$ and $0\leqslant\varepsilon\leqslant1$,
$\varepsilon T_1+(1-\varepsilon)T_2\in \mathbb T$. So, $\mathbb T$ is convex.
For compactness, note that $\mathbb T$ is a closed subset of the unit ball of $\mathbb D^*$
hence compact by Alaoglu's theorem (\cite{Aliprantis-Inf}, Th. 6.21).
\end{proof}
\bigskip

A function $f:\mathbb T\rightarrow\Rn$ is called affine if for every
$T_1,T_2\in\mathbb T$ and $0\leqslant\varepsilon\leqslant1$
$$f(\varepsilon T_1+(1-\varepsilon)T_2)=\varepsilon f(T_1)+(1-\varepsilon)f(T_2).$$
The set of all affine continuous functions on $\mathbb T$ is denoted by $A(\mathbb T)$.
This is a Banach space.

\begin{theorem} \label{affine dense} (\cite{Wickstead} Corollary 1.1.12)
Let $\mathbb T$ be a compact convex subset of a locally convex space $E$.
Any subspace of $A(\mathbb T)$ which contains the constants and separates
the points of $\mathbb T$ is dense in $A(\mathbb T)$.
\end{theorem}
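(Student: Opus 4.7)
The plan is to argue by contradiction using Hahn--Banach together with the barycenter representation of probability measures on compact convex sets. Suppose $V\subseteq A(\mathbb T)$ contains the constants, separates the points of $\mathbb T$, but is not norm-dense. By the Hahn--Banach theorem there exists a nonzero continuous linear functional $\Lambda$ on $A(\mathbb T)$ (with the sup norm inherited from $C(\mathbb T)$) that vanishes on $V$. Since $A(\mathbb T)$ is a closed subspace of $C(\mathbb T)$, extend $\Lambda$ to a bounded linear functional $\tilde\Lambda$ on $C(\mathbb T)$; by the Riesz representation theorem $\tilde\Lambda(f)=\int_{\mathbb T}fd\mu$ for some signed regular Borel measure $\mu$ on $\mathbb T$.

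Next I would exploit $1\in V$: the relation $\Lambda(1)=0$ forces $\mu(\mathbb T)=0$, so the Jordan decomposition $\mu=\mu^+-\mu^-$ yields $\mu^+(\mathbb T)=\mu^-(\mathbb T)$. Call this common mass $c$. If $c=0$ then $\mu=0$, contradicting $\Lambda\neq0$; hence $c>0$ and $\nu^\pm=\mu^\pm/c$ are probability Borel measures on $\mathbb T$.

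Now I would invoke the barycenter theorem for compact convex subsets of locally convex spaces: each probability Borel measure $\nu$ on $\mathbb T$ admits a unique barycenter $T_\nu\in\mathbb T$, characterized by the property that $g(T_\nu)=\int gd\nu$ for every $g\in A(\mathbb T)$. Let $T_+,T_-\in\mathbb T$ be the barycenters of $\nu^+,\nu^-$. For every $f\in V\subseteq A(\mathbb T)$ one computes
$$f(T_+)-f(T_-)=\int fd\nu^+-\int fd\nu^-=\frac1c\tilde\Lambda(f)=\frac1c\Lambda(f)=0.$$
Since $V$ separates the points of $\mathbb T$, this forces $T_+=T_-$, and then for \emph{every} $f\in A(\mathbb T)$ the same computation gives $\Lambda(f)=c(f(T_+)-f(T_-))=0$, contradicting $\Lambda\neq 0$.

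The delicate step is the existence of barycenters and the evaluation formula on $A(\mathbb T)$. Existence is guaranteed because $\mathbb T$ is a compact convex subset of a locally convex space: one defines $T_\nu$ implicitly by $\ell(T_\nu)=\int\ell d\nu$ for $\ell\in E^*$, uses the Hahn--Banach separation theorem to see that $T_\nu$ cannot lie outside any closed convex half-space containing $\mathrm{supp}(\nu)$, and concludes $T_\nu\in\mathbb T$; the formula then extends automatically from continuous linear functionals to continuous affine functions by adding constants. Once this classical fact is in hand, the rest of the argument is a short Hahn--Banach--Riesz computation.
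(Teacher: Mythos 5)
The paper offers no proof of this statement---it simply cites Wickstead's notes---so your argument has to stand on its own. Its architecture (a Hahn--Banach functional annihilating $V$, extension to $C(\mathbb T)$ and Riesz representation, Jordan decomposition, comparison of the barycenters of the two normalized parts) is the standard route to this result and is sound. The genuine gap is the final sentence of your "delicate step": the claim that the barycentric evaluation formula "extends automatically from continuous linear functionals to continuous affine functions by adding constants." Elements of $A(\mathbb T)$ are continuous affine functions \emph{on} $\mathbb T$; in general they are not restrictions of functions $\ell+c$ with $\ell\in E^*$, so adding constants does not reach them. (For example, on the compact convex set $K=\{x\in\ell^2:\ |x_n|\leqslant n^{-3}\}$ the function $g(x)=\sum_n nx_n$ is affine and continuous on $K$ but agrees with no $\ell|_K+c$, $\ell\in(\ell^2)^*$.) Since your computation applies the identity $f(T_\pm)=\int f\,d\nu^\pm$ to arbitrary $f\in V\subseteq A(\mathbb T)$ and then to arbitrary $f\in A(\mathbb T)$, this unproved identity is exactly what carries the whole proof.

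The gap is repairable by a classical lemma, in either of two ways. One: prove first, by Hahn--Banach separation in $E\times\Rn$ applied to the graph $\{(x,g(x)):x\in\mathbb T\}$ and its translate $\{(x,g(x)+\epsilon):x\in\mathbb T\}$, that the functions $\ell|_{\mathbb T}+c$ ($\ell\in E^*$, $c\in\Rn$) are uniformly dense in $A(\mathbb T)$; then the barycenter identity, known for $E^*$ plus constants, passes to uniform limits and holds on all of $A(\mathbb T)$. Two: push $\nu^\pm$ forward to the compact convex graph of $g$ in $E\times\Rn$, take barycenters there, and read off $g(T_\pm)=\int g\,d\nu^\pm$ from the two coordinate projections. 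With either repair your argument is complete and coincides with the usual proof of the cited corollary. Note also that you tacitly need $E$ Hausdorff (so that $\mathbb T$ is compact Hausdorff for Riesz representation and $E^*$ separates points for the barycenter arguments); this is harmless in the paper's application, where $\mathbb T$ sits in $\mathbb D^*$ with the weak* topology, and uniqueness of the barycenter is never actually used.
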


Let $\phi$ be a CL sentence in $\tau$.
We say that $\phi$ is preserved by ultramean if for every ultracharge space
$(I,\mu)$ and $\tau$-structures $M_i$, one has that $\phi^{N}=\int\phi^{M_i}d\mu$
where $N=\prod_\mu M_i$. Similarly, $\phi$ is preserved by powermean if
$\phi^{M^\mu}=\phi^M$ for every charge space $(I,\mathscr A,\mu)$ for which
$M$ is $\mathscr A$-meanable.
Linear sentences are preserved by ultramean and powermean.

\begin{theorem} \label{char1}
If $\phi$ is preserved by ultramean and powermean, it is approximated by linear sentences.
\end{theorem}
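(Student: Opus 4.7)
The plan is to view $\phi$ as a function $\hat\phi$ on the compact convex state space $\mathbb T$, show it is continuous and affine, and then apply Theorem \ref{affine dense} to the subspace of $A(\mathbb T)$ coming from linear sentences. For well-definedness of $\hat\phi(T):=\phi^M$ (for any $M\vDash T$): if $M\equiv N$ then by Theorem \ref{isomorphism} there are charges $\mu,\nu$ with $M^{\mu}\simeq N^{\nu}$, hence $\phi^{M^{\mu}}=\phi^{N^{\nu}}$; preservation of $\phi$ under powermean then gives $\phi^M=\phi^{M^{\mu}}=\phi^{N^{\nu}}=\phi^N$. Affinity of $\hat\phi$ comes from the two-point ultramean: if $T=\varepsilon T_1+(1-\varepsilon)T_2$ and $M_i\vDash T_i$, then $N=\varepsilon M_1+(1-\varepsilon)M_2$ satisfies $\sigma^N=\varepsilon\sigma^{M_1}+(1-\varepsilon)\sigma^{M_2}=T(\sigma)$ for every linear $\sigma$ by Theorem \ref{th1}, so $N\vDash T$, and preservation of $\phi$ under ultramean yields $\hat\phi(T)=\phi^N=\varepsilon\hat\phi(T_1)+(1-\varepsilon)\hat\phi(T_2)$.

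For continuity I would argue as follows. Let $T_\alpha\to T$ in $\mathbb T$ with $M_\alpha\vDash T_\alpha$, and set $r=\limsup_\alpha\phi^{M_\alpha}$. Pass to a subnet $(M_\beta)_{\beta\in J}$ with $\phi^{M_\beta}\to r$ and let $\mathcal F$ be an ultrafilter on $J$ refining the tail filter. Identifying $\mathcal F$ with its $0$-$1$-valued ultracharge, form the ultramean $N=\prod_{\mathcal F}M_\beta$. For every linear $\sigma$, Theorem \ref{th1} together with $T_\beta\to T$ yields $\sigma^N=\lim_{\mathcal F}\sigma^{M_\beta}=\lim_{\mathcal F}T_\beta(\sigma)=T(\sigma)$, so $N\vDash T$; and preservation of $\phi$ under ultramean gives $\phi^N=\lim_{\mathcal F}\phi^{M_\beta}=r$. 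Hence $\hat\phi(T)=\phi^N=r$, and the symmetric argument for $\liminf$ forces $\hat\phi(T_\alpha)\to\hat\phi(T)$.

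To conclude, the map $\sigma\mapsto\hat\sigma$ sends linear sentences into $A(\mathbb T)$, the image contains all constants $\hat r$, and by the very definition of $\mathbb T\subseteq\mathbb D^{*}$ it separates points of $\mathbb T$. Theorem \ref{affine dense} therefore makes it dense in $A(\mathbb T)$, and since $\hat\phi\in A(\mathbb T)$ by the previous two steps, for each $\epsilon>0$ there is a linear $\theta$ with $|\hat\phi(T)-\hat\theta(T)|<\epsilon$ for every $T\in\mathbb T$; since each structure $M$ has a complete theory $T_M\in\mathbb T$ with $\phi^M=\hat\phi(T_M)$ and $\theta^M=\hat\theta(T_M)$, this is exactly $|\phi^M-\theta^M|<\epsilon$ for all $M$. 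The main obstacle is the continuity step, where the subnet-plus-ultrafilter trick is needed to realize the $\limsup$ as $\phi^N$ for an ultramean $N$ forced to model $T$; once ultrafilters are identified with $0$-$1$-valued ultracharges this dovetails cleanly with the preservation hypothesis.
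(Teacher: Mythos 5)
Your proposal is correct and follows essentially the same route as the paper: well-definedness of $f_\phi$ via Theorem \ref{isomorphism} and preservation under powermean, affinity via the two-point ultramean, continuity via an ultraproduct of models of the approximating theories, and then density of the linear-sentence functions in $A(\mathbb T)$ by Theorem \ref{affine dense}. The only (harmless) deviation is in the continuity step, where you argue with nets, subnets and an ultrafilter refining the tail filter, while the paper checks closedness of the sublevel and superlevel sets using sequences and a nonprincipal ultrafilter on $\Nn$; your net formulation is if anything the more careful rendering of the same idea.
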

\begin{proof}
For each linear sentence $\sigma$ define a function $f_\sigma$ on $\mathbb T$
by setting $$f_\sigma(T)=T(\sigma).$$
Clearly, $f_\sigma$ is affine and continuous.
Let $$X=\{f_\sigma:\ \ \sigma\ \mbox{a\ linear}\ \tau\mbox{-sentence}\}.$$
$X$ is a linear subspace of $A(\mathbb T)$ which contains constant functions.
Moreover, if $T_1\neq T_2$, there is a linear sentence $\sigma$ such that
$T_1(\sigma)\neq T_2(\sigma)$. So, $f_\sigma(T_1)\neq f_\sigma(T_2)$.
This shows that $X$ separates points.
By Theorem \ref{affine dense}, $X$ is dense in $A(\mathbb T)$.

Define similarly $f_\phi(T)=\phi^M$ where $M\vDash T\in\mathbb T$.
By Theorem \ref{isomorphism}, if $M\equiv N$,
for some $\mu,\nu$ one has that $M^{\mu}\simeq N^{\nu}$.
Hence, $$\phi^M=\phi^{M^{\mu}}=\phi^{N^{\nu}}=\phi^N.$$
So, $f_\phi$ is well-defined. Let us show that $f_\phi$ is affine.
Let $\varepsilon\in[0,1]$ and $T_1,T_2\in\mathbb T$.
Let $M_1\vDash T_1$ and $M_2\vDash T_2$.
Then, $M=\varepsilon M_1+(1-\varepsilon)M_2$ is a model of the theory $\varepsilon T_1+(1-\varepsilon)T_2$.
Moreover, since $\phi$ is preserved by ultramean, we have that
$$f_\phi(\varepsilon T_1+(1-\varepsilon)T_2)=\phi^M=\varepsilon\phi^{M_1}+(1-\varepsilon)\phi^{M_2}
=\varepsilon f_\phi(T_1)+(1-\varepsilon)f_\phi(T_2).$$
So, $f_\phi$ is affine.
Note also that $f_\phi$ is continuous, i.e. for each $r$ the sets
$$\{T\in\mathbb T:\ f_\phi(T)\leqslant r\},\ \ \ \ \ \ \ \
\{T\in\mathbb T:\ f_\phi(T)\geqslant r\}$$
are closed. For example, assume $T_k\rightarrow T$
in the weak* topology and $f_\phi(T_k)\leqslant r$ for each $k$.
We show that $f_\phi(T)\leqslant r$.
Take a nonprincipal ultrafilter $\mathcal F$ on $\Nn$.
Let $M_k\vDash T_k$ and $M=\prod_{\mathcal F}M_k$.
Then, we have that $M\vDash T$. As a consequence,
$$f_\phi(T)=\phi^M=\lim_{k,\mathcal F}\phi^{M_k}=\lim_{k,\mathcal F} f_\phi(T_k)\leqslant r.$$

We conclude that $f_\phi\in A(\mathbb T)$.
So, since $X$ is dense, for each $\epsilon>0$
there is a linear sentence $\sigma$ such that for every $T\in\mathbb T$,\ \
$|f_\phi(T)-f_\sigma(T)|\leqslant\epsilon$.
In other words, for every $M$, \  $|\phi^M-\sigma^M|\leqslant\epsilon$.
\end{proof}
\bigskip

A $\tau$-sentence $\sigma$ is preserved by linear elementary equivalence if for every
$M,N$, whenever $M\equiv N$, one has that $\sigma^M=\sigma^N$.
Note that if $\sigma,\eta$ are preserved by $\equiv$ then so does
$\sigma\wedge\eta$ and $\sigma\vee\eta$.
In fact, every sentence in the Riesz space generated by the set of linear sentences is preserved
by $\equiv$. We denote this Riesz space by $\Lambda$.

\begin{proposition}
$\phi$ is preserved by linear elementary equivalence if and only if
it is approximated by the Riesz space $\Lambda$ generated by the set of linear sentences.
\end{proposition}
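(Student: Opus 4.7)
For the easy direction, I would first observe that every $\sigma \in \Lambda$ is preserved by $\equiv$: linear sentences are so by definition of $\equiv$, and the class of $\equiv$-preserved sentences is closed under $+$, scalar multiplication, $\wedge$ and $\vee$. If for each $\epsilon > 0$ there is $\sigma \in \Lambda$ with $\sup_M|\phi^M - \sigma^M| \leq \epsilon$, then for $M \equiv N$ the triangle inequality yields $|\phi^M - \phi^N| \leq 2\epsilon$; since $\epsilon$ is arbitrary, $\phi^M = \phi^N$.

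For the converse I would mimic the strategy of Theorem \ref{char1} on the compact convex Hausdorff space $\mathbb T$ of Proposition \ref{compact-convex}, but with the lattice version of Stone-Weierstrass (Kakutani-Stone) replacing the affine density theorem. Define $f_\phi(T) = \phi^M$ for any $M \vDash T$; this is well-defined because two models of a linearly complete theory are $\equiv$-equivalent and $\phi$ is preserved by $\equiv$. Likewise $f_\sigma(T) := \sigma^M$ is well-defined for every $\sigma \in \Lambda$, and $\sigma \mapsto f_\sigma$ is linear and preserves $\wedge,\vee$. The substantive step is showing $f_\phi \in C(\mathbb T)$. Given $T_k \to T$ in the weak* topology, pick $M_k \vDash T_k$, fix a nonprincipal ultrafilter $\mathcal F$ on $\Nn$, and form $M = \prod_{\mathcal F}M_k$: for every linear $\sigma$ one has $\sigma^M = \lim_{k,\mathcal F}\sigma^{M_k} = \lim_{k,\mathcal F}T_k(\sigma) = T(\sigma)$, so $M \vDash T$, while \L o\'s's theorem applied to the CL sentence $\phi$ gives $\lim_{k,\mathcal F}\phi^{M_k} = \phi^M = f_\phi(T)$. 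Thus every nonprincipal $\mathcal F$-limit of the bounded sequence $\{\phi^{M_k}\} = \{f_\phi(T_k)\}$ equals $f_\phi(T)$, which forces convergence of the whole sequence to $f_\phi(T)$ and hence continuity at $T$.

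With continuity established, $X = \{f_\sigma : \sigma \in \Lambda\}$ is a linear sublattice of $C(\mathbb T)$ (since $f_{\sigma \wedge \eta} = f_\sigma \wedge f_\eta$ and similarly for $\vee$) containing the constants; it separates points because already the linear sentences do (if $T_1 \neq T_2$ in $\mathbb T$, some linear $\sigma$ has $T_1(\sigma) \neq T_2(\sigma)$). The Kakutani-Stone theorem then gives that $X$ is sup-norm dense in $C(\mathbb T)$, and since $f_\phi \in C(\mathbb T)$, for each $\epsilon > 0$ there is $\sigma \in \Lambda$ with $|\phi^M - \sigma^M| \leq \epsilon$ for every $M$ (evaluating at $T = Th(M)$). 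The main obstacle is exactly the continuity of $f_\phi$: in Theorem \ref{char1} it came for free from preservation under ultramean, but here the weaker hypothesis of $\equiv$-preservation forces the more delicate argument above, in which the fact that all ultrafilter limits of $\phi^{M_k}$ collapse to a single value is precisely what $\equiv$-preservation of $\phi$ buys.
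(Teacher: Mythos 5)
Your argument is correct and follows essentially the same route as the paper: define $f_\phi$ on $\mathbb T$ (well-defined by $\equiv$-preservation), verify its continuity exactly as in Theorem \ref{char1} using ultraproducts of models $M_k\vDash T_k$ and {\L}o\'{s}'s theorem for the CL sentence $\phi$, and then apply the lattice (Kakutani--Stone) version of Stone--Weierstrass to the sublattice $X=\{f_\sigma:\sigma\in\Lambda\}$, which contains the constants and separates points. Your explicit treatment of the easy direction and of why all $\mathcal F$-limits of $\phi^{M_k}$ collapse to $f_\phi(T)$ only makes explicit what the paper leaves implicit; there is no substantive difference in method.
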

\begin{proof}
As in the proof of Theorem \ref{char1}, for each $\sigma\in\Lambda$,
define $f_\sigma:\mathbb T\rightarrow\Rn$ by
$$f_\sigma(T)=\sigma^M$$ where $M\vDash T$ is arbitrary.
Let $$X=\{f_\sigma:\ \sigma\in\Lambda\}.$$
Then, $X$ is a sublattice of $\mathbf{C}(\mathbb T)$ which contains $1$ and separates points.
In particular, $-f_\sigma=f_{-\sigma}$, $f_\sigma+f_\eta=f_{\sigma+\eta}$
and $f_\sigma\wedge f_\eta=f_{\sigma\wedge\eta}$.
By the assumption, the function $f_\phi(T)=\phi^M$ for $M\vDash T$ is well-defined.
Since $\phi$ is preserved by ultraproducts, it is shown similar to the proof
of Proposition \ref{char1} that $f_\phi$ is continuous.
So, by the lattice version of Stone-Weierstrass theorem (see \cite{Aliprantis-Inf}
Th. 9.12), $f_\phi$ is approximated by elements of $\Lambda$.
\end{proof}

\section{$\mathscr L^p$-logics}
There are interesting CL theories which are linear in some sense but
not expressible in the framework of $\mathscr L^1$. For example,
(see \cite{Bacak} for definitions)
a complete metric space is a length spaces if
$$\forall xy\ \forall\epsilon>0\ \ \exists z\ \ \ \ \ \ \ \ \ \
d(x,z)^2+d(y,z)^2\leqslant\frac{1}{2}d(x,y)^2+\epsilon.$$
Equivalently,
$$\sup_{xy}\inf_y \big[d(x,z)^2+d(y,z)^2-\frac{1}{2}d(x,y)^2\big]\leqslant0.$$
A complete metric space is a Hadamard space if
$$\forall xy\ \forall\epsilon>0\ \ \exists m\forall z \ \ \ \ \ \ \ \ \ \
d(z,m)^2+\frac{d(x,y)^2}{4}\leqslant\frac{d(z,x)^2+d(z,y)^2}{2}+\epsilon.$$
Hilbert spaces have similar axioms (the parallelogram law).
Also, the theory of abstract $L^p$-spaces is stated by
$$\|x\wedge y\|^p\leqslant\|x\|^p+\|y\|^p\leqslant\|x+y\|^p.$$
Such theories are generally formalizable in the logics $\mathscr L^p$
defined below.

Let $\tau$ be a signature and $1\leqslant p<\infty$.
The set of formulas of $\mathscr L^p$ is inductively defined as follows:
$$r,\ \ d(t_1,t_2)^p,\ \ R(t_1,...,t_n),\ \ r\phi,\ \ \phi+\psi,\ \ \sup_x\phi,\ \ \inf_x\phi.$$
So, exceptionally, the formula $d(t_1,t_2)$ in $\mathscr L^1(\tau)$ is replaced
with $d(t_1,t_2)^p$.
If $M$ is a $\tau$-structure, $M^n$ is equipped with the metric
$$d_n(\x,\y)=\Big(\sum_{k=1}^n d(x_k,y_k)^p\Big)^{\frac{1}{p}}.$$
As before, for every $F,R\in \tau$, we require that $F^M$ to be
$\lambda_F$-continuous and $R^M$ to be $\lambda_R$-continuous
(as well as being bounded by $\mathbf b_R$).
Then, for every formula $\phi$ there exists a modulus $\lambda_\phi$
such that $\phi^M$ is $\lambda_\phi$-continuous.
Note that we can not require to have that
$|\phi^M(\x)-\phi^M(\y)|\leqslant\lambda_\phi (d(\x,\y)^p)$
since this is highly restrictive for $p>1$.

Logical notions as condition, theory, elementary equivalence etc
are defined as before.
The ultramean construction can be carried out similarly.
Let $\mu$ be an ultracharge on $I$ and $(M_i,d_i)$ be a
$\tau$-structure for each $i$. For $a,b\in \prod_i M_i$ set
$$d(a,b)=\|d_i(a_i,b_i)\|_p=\Big(\int d_i(a_i,b_i)^p d\mu\Big)^{\frac{1}{p}}.$$
Then $d$ is a pseudometric on $\prod_iM_i$ and $d(a,b)=0$
defines an equivalence relation on it.
The equivalence class of $(a_i)$ is denoted by $[a_i]$ and the resulting quotient set by
$M=\prod_{\mu}^p M_i$ (or even by the previous notations if there is no risk of confusion).
The metric induced on $M$ is denoted again by $d$.
We also define a $\tau$-structure on $M$ as follows.
For each $c,F\in\tau$ and non-metric $R\in\tau$ (unary for simplicity)
and $(a_i)\in\prod_i M_i$ set
$$c^M=[c^{M_i}],\ \ \ \ \ \ \ \ \ \ \ F^M([a_i])=[F^{M_i}(a_i)]$$
$$R^M([a_i])=\int R^{M_i}(a_i)d\mu.$$
Then, $F^M$ is $\lambda_F$-continuous and that $R^M$ is $\lambda_R$-continuous
and bounded by $\mathbf b_R$. In particular, if $\a=([a^1_i],...,[a^n_i])$
and $\a_i=(a^1_i,...,a^n_i)$, we have that
$$R^{M_i}(\a_i)-R^{M_i}(\b_i)\leqslant\lambda_R\ (d^{M_i}_n(\a_i,\b_i))\hspace{12mm} \forall i.$$
Since $\|f\|_1\leqslant\|f\|_p$ for any integrable $f:I\rightarrow\Rn$,
by integrating and using Jensen's inequality,
$$R^M(\a)-R^M(\b)\leqslant\lambda_R\big(\|d^{M_i}_n(\a_i,\b_i)\|_1\big)\leqslant
\lambda_R\big(\|d^{M_i}_n(\a_i,\b_i)\|_p\big)=\lambda_R(d^M_n(\a,\b)).$$
Hence $M$ is a $\tau$-structure.
The ultramean theorem \ref{th1} as well as the powermean theorem
\ref{powermean} and all results proved in the previous sections
hold similarly in the framework of $\mathscr L^p$.
We just mention some of these results. Below, $\equiv^p$
is the elementary equivalence in the sense of $\mathscr L^p$.

\begin{theorem} (Compactness)
Every linearly satisfiable $\mathscr L^p$-theory in a signature $\tau$ is satisfiable.
\end{theorem}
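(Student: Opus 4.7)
The plan is to mimic the proof of Theorem \ref{compactness}, replacing the $\mathscr L^1$ apparatus with its $\mathscr L^p$ counterpart at each step. Let $T$ be a linearly satisfiable $\mathscr L^p$-theory in $\tau$. First I would verify, exactly as in the $\mathscr L^1$ case, that for every $\mathscr L^p$-sentence $\sigma$ at least one of $T\cup\{0\leqslant\sigma\}$ or $T\cup\{\sigma\leqslant0\}$ remains linearly satisfiable; this uses only linearity of the condition set, not the particular form of the atomic formulas, so the argument carries over verbatim. A Zorn's lemma argument then extends $T$ to a maximal linearly satisfiable $\mathscr L^p$-theory, still denoted $T$. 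Using the two-point ultramean $\varepsilon M_1+(1-\varepsilon)M_2$ (whose $\mathscr L^p$-version is available because the paper asserts that the ultramean theorem goes through for $\mathscr L^p$), one shows that for every sentence $\sigma$ there is a unique $r$ with $\sigma=r\in T$, so $T$ may be regarded as a positive linear functional of norm one on the space $\mathbb D^p$ of $\mathscr L^p$-sentences modulo semantic equivalence.

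Next I would choose a set $\{M_i\}_{i\in I}$ of representatives, one from each $\equiv^p$-class of $\tau$-structures, equip $I$ with the discrete topology, and identify $\sigma\in\mathbb D^p$ with the bounded function $i\mapsto\sigma^{M_i}\in\mathbf{C}_b(I)$. Under this identification $\mathbb D^p$ is an ordered subspace of $\mathbf{C}_b(I)$ that contains the constants and so majorizes $\mathbf{C}_b(I)$. By the Kantorovich extension theorem the positive linear functional $T$ extends to a positive linear functional $\bar T$ on $\mathbf{C}_b(I)$, and the Riesz representation theorem yields a probability charge $\mu$ on $I$ with
$$T(\sigma)=\bar T(\sigma)=\int_I\sigma^{M_i}\,d\mu\qquad\text{for every }\sigma\in\mathbb D^p.$$
Finally, extend $\mu$ to an ultracharge $\bar\mu$ on $I$ and form the $\mathscr L^p$-ultramean $N=\prod_{\bar\mu}^pM_i$. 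Invoking the $\mathscr L^p$-version of the ultramean theorem, $\sigma^N=\int\sigma^{M_i}\,d\bar\mu=T(\sigma)$ for every $\sigma$, so $N\vDash T$.

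The only point that is not a direct transcription of the $\mathscr L^1$ argument is the verification that the $\mathscr L^p$-ultramean satisfies the Łoś-type integral formula for atomic sentences of the form $d(t_1,t_2)^p$. This is where the definition $d(a,b)=\bigl(\int d_i(a_i,b_i)^p\,d\bar\mu\bigr)^{1/p}$ is designed precisely to make $d^p$ behave additively under integration, so $d([a_i],[b_i])^p=\int d_i(a_i,b_i)^p\,d\bar\mu$; once this atomic case is in hand, induction through $r\phi$, $\phi+\psi$, $\sup_x$, $\inf_x$ proceeds exactly as in the $\mathscr L^1$ version of Theorem \ref{th1}. Thus the main obstacle is essentially bookkeeping — checking that each auxiliary construction used above (Kantorovich extension, Riesz representation, extension of a charge to an ultracharge, the ultramean theorem) is neutral with respect to the choice of $p$, which indeed it is since these statements are purely measure-theoretic or logical in content and do not depend on the $p$-th-power modification of the metric on $M^n$.
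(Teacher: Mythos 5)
Your proposal is correct and follows essentially the same route the paper intends: the paper proves the $\mathscr L^1$ case (maximal linearly satisfiable extension, Kantorovich extension, Riesz representation, extension to an ultracharge, then the ultramean theorem) and simply asserts the $\mathscr L^p$ case "holds similarly," which is exactly the transcription you carry out. Your added check of the atomic case $d(t_1,t_2)^p$ against the $p$-metric $d(a,b)=\bigl(\int d_i(a_i,b_i)^p\,d\bar\mu\bigr)^{1/p}$ is precisely the point the paper's $\mathscr L^p$ section addresses, so nothing is missing.
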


\begin{theorem} (Axiomatizability)
A class of $\tau$-structures is $\mathscr L^p$-axiomatizable in the signature
$\tau$ if and only if it is closed under $p$-ultramean and $\equiv^p$.
\end{theorem}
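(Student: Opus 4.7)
The plan is to treat the two directions separately. The forward direction is essentially a corollary of the $\mathscr L^p$-version of Theorem \ref{th1} (ultramean), and the backward direction reduces, via the strengthening of the compactness proof recorded after Theorem \ref{compactness}, to a linear-satisfiability computation.

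For the forward direction, suppose $\mathcal K=\mathrm{Mod}(T)$ for some linear $\mathscr L^p$-theory $T$. Closure under $\equiv^p$ is immediate, since whether $M\vDash T$ depends only on the values in $M$ of the closed conditions appearing in $T$. For closure under $p$-ultramean, let $M_i\in\mathcal K$ for $i\in I$ and let $\mu$ be an ultracharge on $I$; by the $\mathscr L^p$-version of the ultramean theorem, each closed condition $\phi\leqslant\psi$ in $T$ transfers to $N=\prod_\mu^p M_i$ via $\phi^N=\int\phi^{M_i}d\mu\leqslant\int\psi^{M_i}d\mu=\psi^N$, so $N\vDash T$, i.e.\ $N\in\mathcal K$.

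For the substantive direction, assume $\mathcal K$ is closed under both $p$-ultramean and $\equiv^p$, and set $T=\{\phi\leqslant\psi:\phi^M\leqslant\psi^M\text{ for all }M\in\mathcal K\}$. Then $\mathcal K\subseteq\mathrm{Mod}(T)$ by construction, so fix $N\vDash T$; I aim to show $N\in\mathcal K$. Let $T_N$ denote the complete linear theory of $N$, i.e.\ the functional $\sigma\mapsto\sigma^N$ on the space of closed linear sentences. The crux is to verify that $T_N$ is linearly satisfiable \emph{within} $\mathcal K$: take any finite combination $\sum_k r_k\phi_k\leqslant\sum_k r_k\psi_k$ produced from conditions in $T_N$ with $r_k\geqslant 0$, and set $\eta=\sum_k r_k(\psi_k-\phi_k)$, so $\eta^N\geqslant 0$. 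If no $M\in\mathcal K$ satisfied $\eta\geqslant 0$, then $\sup\{\eta^M:M\in\mathcal K\}\leqslant-\varepsilon$ for some $\varepsilon>0$, forcing $\eta+\varepsilon\leqslant 0$ into $T$ and therefore $\eta^N\leqslant-\varepsilon$, contradicting $\eta^N\geqslant 0$.

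Once linear satisfiability of $T_N$ in $\mathcal K$ is in hand, the refinement of the compactness theorem mentioned after Theorem \ref{compactness} produces a model of $T_N$ of the form $N^\ast=\prod_\mu^p M_i$ with every $M_i\in\mathcal K$. Closure under $p$-ultramean places $N^\ast$ in $\mathcal K$, and since $N^\ast\vDash T_N$ we have $N\equiv^p N^\ast$, so closure under $\equiv^p$ finally gives $N\in\mathcal K$. The only thing requiring real care is that the refined compactness argument and the ultramean transfer theorem both go through verbatim in the $\mathscr L^p$-setting under the metric $d_n$ and the $p$-th-power-integral definition; the paper asserts this, and confirming it is the main (but essentially routine) obstacle to turning this sketch into a complete proof.
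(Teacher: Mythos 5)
Your overall architecture is exactly the route the paper intends (the remark following Theorem \ref{compactness}): the forward direction via the $\mathscr L^p$ ultramean theorem, and the converse by showing that the complete theory $T_N$ of an arbitrary model $N$ of $Th(\mathcal K)$ has a model of the form $\prod_\mu^p M_i$ with all $M_i\in\mathcal K$, then invoking closure under $p$-ultramean and $\equiv^p$. However, there is a genuine slip at the very step you call the crux: from ``no $M\in\mathcal K$ satisfies $\eta\geqslant 0$'' you cannot infer $\sup\{\eta^M:\ M\in\mathcal K\}\leqslant-\varepsilon$ for some $\varepsilon>0$. The supremum may equal $0$ without being attained (all $\eta^M<0$ while $\eta^M\to 0$ along a sequence in $\mathcal K$); in that case no condition $\eta+\varepsilon\leqslant 0$ lies in $T$, your contradiction evaporates, and $T_N$ has not been shown to be linearly satisfiable \emph{within} $\mathcal K$ in the exact sense you use.

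The gap is local and can be closed in two ways. (a) Use the closure hypothesis itself: let $s=\sup_{M\in\mathcal K}\eta^M$. If $s<0$ then $\eta\leqslant s$ belongs to $T$, contradicting $\eta^N\geqslant 0$; so $s\geqslant 0$. Pick $M_n\in\mathcal K$ with $\eta^{M_n}>s-\frac{1}{n}$ and a nonprincipal ultrafilter $\mathcal F$ on $\Nn$; since ultrafilters are ultracharges, $M=\prod_{\mathcal F}M_n\in\mathcal K$ and $\eta^M=\lim_{\mathcal F}\eta^{M_n}=s\geqslant 0$, so the combined condition is satisfied in a member of $\mathcal K$ after all. (b) Alternatively, bypass the finite-combination check entirely: what the refinement of the proof of Theorem \ref{compactness} really needs is that $T_N$ induces a well-defined, positive, norm-one linear functional on the space of functions $i\mapsto\sigma^{M_i}$, where the $M_i$ run over representatives of the $\equiv^p$-classes of members of $\mathcal K$; this is immediate from $N\vDash T$, because $\eta^M\leqslant\sigma^M$ for all $M\in\mathcal K$ puts $\eta\leqslant\sigma$ into $T$ and hence forces $\eta^N\leqslant\sigma^N$. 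With either repair, the Kantorovich extension and Riesz representation arguments of the compactness proof yield an ultracharge $\mu$ and $N^*=\prod_\mu^p M_i\vDash T_N$ with every $M_i\in\mathcal K$, and your concluding chain ($N^*\in\mathcal K$, $N\equiv^p N^*$, hence $N\in\mathcal K$) is correct; the transfer of the ultramean theorem and the Jensen-inequality estimates to the $\mathscr L^p$ setting is routine, as the paper asserts.
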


\begin{theorem} (Isomorphism)
If $M,N$ are complete and $M\equiv^p N$, then there is a charge space
$(I,\mathscr A,\mu)$ such that $M$, $N$ are $\mathscr A$-meanable and
$M^{\mu}\simeq N^{\nu}$.
\end{theorem}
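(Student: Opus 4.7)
The plan is to transpose the proof of Theorem \ref{isomorphism} (the $\mathscr L^1$-isomorphism theorem) step by step into the $\mathscr L^p$ framework, observing that every ingredient has an obvious $\mathscr L^p$-analog already alluded to by the paper. First I would invoke the $p$-powermean saturation theorem (the $\mathscr L^p$-analog of Theorem \ref{saturated}, whose proof proceeds identically via iterated $p$-powermeans, the $\mathscr L^p$-version of Proposition \ref{compose}, Lemma \ref{inverse limit} and Lemma \ref{charge product}) to obtain charge spaces $(I_1,\mathscr A_1,\wp_1)$ and $(I_2,\mathscr A_2,\wp_2)$ with $M$ $\mathscr A_1$-meanable, $N$ $\mathscr A_2$-meanable, and $M^{\wp_1}$, $N^{\wp_2}$ linearly $\aleph_1$-saturated and complete. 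By the diagonal embedding corollary applied in the $\mathscr L^p$-setting, $M\preccurlyeq^p M^{\wp_1}$ and $N\preccurlyeq^p N^{\wp_2}$, so the hypothesis $M\equiv^p N$ gives $M^{\wp_1}\equiv^p N^{\wp_2}$.

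The heart of the argument is the $\mathscr L^p$-analog of Proposition \ref{10}: two linearly $\aleph_0$-saturated $\mathscr L^p$-equivalent $\tau$-structures are CL-equivalent. I would repeat the back-and-forth: let $\mathscr F$ be the family of finite partial maps $f:\bar a\mapsto\bar b$ with $tp^p(\bar a)=tp^p(\bar b)$, and given $c\in M^{\wp_1}$ set $p(y)=\{\phi(\bar b,y)=r:\phi^{M^{\wp_1}}(\bar a,c)=r,\ \phi\text{ an }\mathscr L^p\text{-formula}\}$. For each such $\phi$, the relations $\inf_y\phi(\bar b,y)\leqslant r$ and $\sup_y\phi(\bar b,y)\geqslant r$ hold in $N^{\wp_2}$, so by $\mathscr L^p$-linear compactness $p$ is satisfiable, hence realized by some $e\in N^{\wp_2}$ thanks to $\aleph_1$-saturation. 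One then shows by induction that $\mathscr F$ preserves every CL-formula; the atomic case reduces to matching $d^p$-values, which matches $d$-values as well, and $\wedge,\vee$ pass through point-evaluation while $\sup_y,\inf_y$ use the forth/back property. Hence $M^{\wp_1}\equiv_{CL} N^{\wp_2}$.

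Finally, both $M^{\wp_1}$ and $N^{\wp_2}$ being complete, the classical (CL) Keisler-Shelah theorem supplies an ultrafilter $\mathcal F$ with $(M^{\wp_1})^{\mathcal F}\simeq(N^{\wp_2})^{\mathcal F}$. Applying Proposition \ref{compose} in its $\mathscr L^p$-form (Fubini still applies since $\phi^M$ remains bounded measurable) yields $M^{\wp_1\otimes\mathcal F}\simeq N^{\wp_2\otimes\mathcal F}$, and setting $\mu=\wp_1\otimes\mathcal F$, $\nu=\wp_2\otimes\mathcal F$ on the appropriate product $\sigma$-algebras closes the argument, since tensoring with an ultracharge preserves the meanability hypothesis.

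The main obstacle is verifying the partial isomorphism step rigorously in the $\mathscr L^p$-syntax, where the basic metric formula is $d(t_1,t_2)^p$ rather than $d(t_1,t_2)$: the type-squeezing argument requires that matching $\mathscr L^p$-types of $\bar a$ and $\bar b$ implies matching CL-atomic values, which is true because $d^p$ and $d$ are mutually definable via the continuous function $t\mapsto t^{1/p}$ on a bounded interval (and approximable by linear formulas up to arbitrary $\epsilon$ in type-values, which suffices). Once this is checked, the rest is a faithful translation.
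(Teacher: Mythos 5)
Your proposal is correct and follows essentially the route the paper intends: the paper proves the $\mathscr L^1$ case (Theorem \ref{isomorphism}) via the saturation theorem \ref{saturated}, the back-and-forth Proposition \ref{10}, the CL isomorphism theorem and Proposition \ref{compose}, and then simply asserts that all of this transfers to $\mathscr L^p$, which is exactly the translation you carry out (including the correct observation that agreement of $d^p$-values yields agreement of the CL-atomic $d$-values). The only addition needed if the statement is read with a single charge space ($\mu=\nu$) is the paper's remark following Theorem \ref{isomorphism}: by Lemma \ref{charge product} the saturation construction can be run with a common charge dominating the type-representing ultracharges for both $M$ and $N$, so the final $\wp\otimes\mathcal F$ serves both structures.
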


\begin{theorem} (Characterization)
If $\phi$ is a CL sentence in $\tau$ which is preserved by $p$-ultramean and
$p$-powermean, it is approximated by $\mathscr L^p$-sentences in $\tau$.
\end{theorem}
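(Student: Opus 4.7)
The plan is to transcribe the argument of Theorem \ref{char1} verbatim into the $\mathscr L^p$ setting, using the $p$-versions of the ultramean, powermean and isomorphism theorems stated just above. Let $\mathbb D^p$ denote the vector space of equivalence classes of $\mathscr L^p$-sentences (with $\sigma\equiv\eta$ iff $\sigma^M=\eta^M$ for every $M$), normed by $\|\sigma\|=\sup_M\sigma^M$, and let $\mathbb T^p$ be the set of linearly complete $\mathscr L^p$-theories, regarded as a weak*-closed subset of the unit ball of $(\mathbb D^p)^*$. The proof of Proposition \ref{compact-convex} adapts without change to show that $\mathbb T^p$ is compact, convex and Hausdorff.

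For each $\mathscr L^p$-sentence $\sigma$ define $f_\sigma:\mathbb T^p\rightarrow\Rn$ by $f_\sigma(T)=T(\sigma)$; this is continuous and affine. The space $X=\{f_\sigma:\sigma\in\mathscr L^p\}$ is a linear subspace of $A(\mathbb T^p)$ containing the constants, and it separates points because two distinct linearly complete $\mathscr L^p$-theories disagree on some $\mathscr L^p$-sentence. Theorem \ref{affine dense} then gives that $X$ is dense in $A(\mathbb T^p)$.

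Next I would define $f_\phi(T)=\phi^M$ for an arbitrary $M\vDash T$. To see this is well-defined, let $M,N\vDash T$; passing to completions (which preserves $\equiv^p$), the $p$-isomorphism theorem provides charge spaces $(I,\mathscr A,\mu)$ and $(J,\mathscr B,\nu)$ with $M^\mu\simeq N^\nu$, and preservation of $\phi$ under $p$-powermean yields $\phi^M=\phi^{M^\mu}=\phi^{N^\nu}=\phi^N$. Affineness of $f_\phi$ comes from preservation under $p$-ultramean applied to a two-point index set: given $T_1,T_2\in\mathbb T^p$, $\varepsilon\in[0,1]$ and $M_i\vDash T_i$, the ultracharge with weights $\varepsilon$ and $1-\varepsilon$ produces a $p$-ultramean $M$ satisfying $\sigma^M=\varepsilon\sigma^{M_1}+(1-\varepsilon)\sigma^{M_2}$ for every $\mathscr L^p$-sentence, so $M\vDash\varepsilon T_1+(1-\varepsilon)T_2$, while $\phi^M=\varepsilon\phi^{M_1}+(1-\varepsilon)\phi^{M_2}$ by the hypothesis on $\phi$. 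For continuity, if $T_k\rightarrow T$ in $\mathbb T^p$ and $f_\phi(T_k)\leqslant r$ for all $k$, pick $M_k\vDash T_k$ and a nonprincipal ultrafilter $\mathcal F$ on $\Nn$; the $p$-ultramean reduces to the ultrafilter limit on $\mathscr L^p$-sentences, so $\prod_{\mathcal F}^p M_k\vDash T$ and preservation of $\phi$ gives $f_\phi(T)=\lim_{k,\mathcal F}f_\phi(T_k)\leqslant r$. The symmetric inequality is handled the same way, so $f_\phi\in A(\mathbb T^p)$. Density of $X$ then furnishes, for each $\epsilon>0$, an $\mathscr L^p$-sentence $\sigma$ with $|f_\phi(T)-f_\sigma(T)|\leqslant\epsilon$ for every $T\in\mathbb T^p$, which is precisely the required uniform approximation $|\phi^M-\sigma^M|\leqslant\epsilon$.

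The main obstacle is not any single inequality but keeping book on the $\mathscr L^p$ machinery: verifying that each ingredient invoked in the $\mathscr L^1$ proof (Banach--Alaoglu, Kantorovich extension, two-point ultramean decomposition, convergence along a nonprincipal ultrafilter on $\Nn$, the isomorphism theorem) has a genuine $\mathscr L^p$ analogue. The paper already asserts that these results transfer, so the adaptation is routine once those analogues are in place; the affineness of $f_\phi$ is the one place where the hypothesis of preservation under $p$-ultramean is actually consumed, and well-definedness is the one place where preservation under $p$-powermean is needed.
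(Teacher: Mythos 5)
Your proposal is correct and is essentially the paper's own argument: the paper proves the $p=1$ case in Theorem \ref{char1} and simply asserts that the proof transfers to $\mathscr L^p$, which is exactly the transcription you carry out (your extra care about passing to completions before invoking the $p$-isomorphism theorem is a harmless and sensible clarification). Nothing further is needed.
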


\end{document}